\DeclareSymbolFont{bbold}{U}{bbold}{m}{n}
\DeclareSymbolFontAlphabet{\mathbbold}{bbold}
\newcommand{\ind}{\mathbbold{1}}
\numberwithin{equation}{section}
\theoremstyle{plain} \newtheorem{theorem}{Theorem}[section]
\theoremstyle{plain} 
\theoremstyle{plain} \newtheorem{corollary}[theorem]{Corollary}
\theoremstyle{plain} \newtheorem{proposition}[theorem]{Proposition}
\theoremstyle{remark} \newtheorem{remark}[theorem]{Remark}
\theoremstyle{definition} \newtheorem{example}[theorem]{Example}
\theoremstyle{definition} \newtheorem{definition}[theorem]{Definition}
\theoremstyle{remark} 
\newcommand{ \R}{ \mathbb R }
\newcommand{\N}{ \mathbb N}
\renewcommand{\Pr}{ \mathbb P}
\newcommand{\bZ}{ \mathbb Z}
\newcommand{\cB}{\mathcal{B}}
\newcommand {\cG}{\mathcal{G}}
\newcommand {\cC}{\mathcal{C}}
\newcommand {\cD}{\mathcal{D}}
\newcommand{\sgn}{\operatorname{sgn}}
\newcommand{\bR}{\mathbb R}
\newcommand{\bP}{\mathbb P}
\newcommand{\bQ}{\mathbb Q}
\newcommand{\cF}{\mathcal F}
\def\be{\begin{equation}}
\def\ee{\end{equation}}
\begin{document}
\title[Conditioned Markov processes]{Markov processes conditioned on their location at large exponential times}

\author[S.N. Evans]{Steven N. Evans}
\thanks{S.N.E. was supported in part by NSF grant DMS-0907630, NSF grant DMS-1512933, and NIH grant 1R01GM109454-01}
\address{Department of Statistics \#3860\\
 367 Evans Hall \\
 University of California \\
  Berkeley, CA  94720-3860 \\
   USA}
\email{evans@stat.berkeley.edu}
\author[A. Hening]{Alexandru Hening }
\thanks{A.H. was supported by EPSRC grant EP/K034316/1}
\address{ University of Oxford\\
 Department of Statistics \\
 24-29 St Giles \\
 Oxford, OX1 3LB\\
 United Kingdom}
 \email{al.hening@gmail.com}
 \keywords{Excursion, local time, Doob $h$--transform, bang--bang Brownian motion,  Campbell measure, diffusion, resurrection}

\begin{abstract}
Suppose that $(X_t)_{t \ge 0}$ is a one-dimensional Brownian motion with negative drift $-\mu$.
It is possible to make sense of conditioning this process to be in the
state $0$ at an independent exponential random time and if we kill the conditioned process
at the exponential time the resulting process is Markov.  If we let the rate parameter
of the random time go to $0$, then the limit of the killed Markov process evolves like
$X$ conditioned to hit $0$, after which time it behaves as $X$ killed at the last time
$X$ visits $0$.  Equivalently, the limit process has the dynamics of the killed ``bang--bang''
Brownian motion that evolves like Brownian motion with positive drift $+\mu$ when it is negative, like
Brownian motion with negative drift $-\mu$ when it is positive, and is killed according to the local time
spent at $0$.

%If $(X_t)_{t \ge 0}$ is the continuous-time simple random walk on the integers
%that jumps to the states $x-1$ and $x+1$ with respective rates $\alpha$ and $\beta$ when
%it is in state $x \in \bZ$ and $\zeta$ is an independent nonnegative random variable that
%has the exponential distribution with rate $\lambda > 0$ then let $(X_t^\lambda)_{t \ge 0}$
%be the process that is obtained by conditioning on the event $\{X_\zeta = 0\}$ and killing
%the resulting process at the time $\zeta$. As $\lambda \downarrow 0$ the Markov process $(X_t^\lambda)_{t \ge 0}$ converges to a Markov process $(X_t^0)_{t \ge 0}$ that is pushed upwards when it is negative and downwards when
%it is positive and which is killed at a certain rate when it is in the state $0$. One can also see that $(X_t)_{0 \le t < \zeta}$ started at $X_0 = 0$ and conditioned on $\{X_\zeta = 0\}$
%has the same distribution in the limit $\lambda \downarrow 0$
%as $(X_t)_{t \ge 0}$ killed at the time the process leaves the state $0$ for the last time.

An extension of this result holds in great generality for Borel right processes conditioned to be
in some state $a$ at an exponential random time, at which time they are killed.  Our proofs involve understanding
the Campbell measures associated with local times, 
the use of excursion theory, and  the development of a suitable analogue of the 
``bang--bang'' construction for general Markov processes.

%: if we condition a transient Markov process to be in a fixed state $a$ at some exponential time $\zeta$,
%kill the process at $\zeta$, and let the rate of $\zeta$ go to $0$, then, under quite weak assumptions, the resulting Markov process looks
%like a certain recurrent Markov process that is killed according to an appropriate mechanism when it is
%in the state $a$. We also prove that the limiting process looks like the initial process conditioned to hit $a$ up to the time it hits $a$ and then looks like the initial process killed at the first infinite excursion away from $a$.

As examples, we consider the special case when the transient Borel right process is a one-dimensional diffusion.  Characterizing
the limiting conditioned and killed process via its infinitesimal generator leads to an investigation of the $h$-transforms
of transient one-dimensional diffusion processes that goes beyond what is known and is of independent interest.
\end{abstract}

\maketitle

\tableofcontents

\section{Introduction}\label{s:intro}
A basic phenomenon that lies at the core of the theory of continuous time Markov processes is the fact that sometimes goes by the name of
``competing exponentials'':  if $\zeta$ and $\xi$ are independent random exponential random variables with respective rate parameters $\lambda$ and $\mu$, then $\bP\{\zeta < \xi\} = \frac{\lambda}{\lambda + \mu}$
and conditional on the event $\{\zeta < \xi\}$ the random variables
$\zeta$ and $\xi - \zeta$ are independent with exponential distributions
that have rate parameters $\lambda + \mu$ and $\mu$.

Letting $\lambda \downarrow 0$, we see that asymptotically the conditional distribution of $(\zeta, \xi-\zeta)$ given $\{\zeta < \xi\}$  is that of
a pair of independent exponential random variables with the same rate parameter $\mu$.

More generally, if $\zeta$ and $\xi$ are independent with $\zeta$ having an
exponential distribution with rate parameter $\lambda$ and $\xi$ is now
an arbitrary nonnegative random variable with a finite nonzero expectation, then
\[
\lim_{\lambda \downarrow 0} \bP\{\xi \in dx \, | \, \zeta < \xi\}
=
\frac{x \bP\{\xi \in dx\}}{\bP[\xi]}
\]
and
\[
\lim_{\lambda \downarrow 0} \bP\{\zeta \in dz \, | \, \zeta < \xi, \, \xi = x\}
=
\frac{\ind\{z < x\} \, dz}{x}.
\]
In particular,
\[
\lim_{\lambda \downarrow 0} \bP\{\zeta \in dz \, | \, \zeta < \xi\}
=
\frac{\bP\{\xi > z\} \, dz}{\bP[\xi]}.
\]
If we let $M$ be the random measure that is the restriction of
Lebesgue measure to the interval $[0,\xi)$, then
one way of expressing the last set of results is that
\[
\lim_{\lambda \downarrow 0} \bP\{\xi \in dx, \, \zeta \in dz \, | \, \zeta < \xi\}
=
\frac{\bP[\ind\{\xi \in dx\} \, M(dz)]}{\bP[M(\bR_+)]}.
\]

The probability measure on $\Omega \times \bR_+$ that assigns mass
\[
\frac{\bP[\ind_A \, M(B)]}{\bP[M(\bR_+)]}
\]
to the set $A \times B$ is called the {\em Campbell measure} associated
with the random measure $M$.  In this paper we will be interested in
Campbell measures in the case where $M$ is the local time at some
state $a$ for a transient Markov process.   As one might expect from the above 
calculations, the Campbell measure
may be interpreted as describing the limit as $\lambda \downarrow 0$
of the joint distribution of the Markov process and the independent exponential $\zeta$ conditional
on the event that the Markov process is in the state $a$ at time $\zeta$.

We next present a simple example that motivates our work and doesn't require any sophistication
in describing what we mean by conditioning a Markov process to be in a given state at an
independent exponential time because in this example the  event on which we are conditioning has positive probability.

\begin{example}\label{ex:RW_1}
Suppose that $(X_t)_{t \ge 0}$ is the continuous-time simple random walk on the integers
that jumps to the states $x-1$ and $x+1$ with respective rates $\alpha$ and $\beta$ when
it is in state $x \in \bZ$.  Suppose further that $\zeta$ is an independent nonnegative random variable that
has the exponential distribution with rate $\lambda > 0$.  Let $(X_t^\lambda)_{t \ge 0}$
be the process that is obtained by conditioning on the event $\{X_\zeta = 0\}$ and killing
the resulting process at the time $\zeta$.  Then,  $(X_t^\lambda)_{t \ge 0}$ is a Markov process with
\[
\bP^x\{X_t^\lambda = y\}
=
\frac{\bP^x\{X_t = y, \, \zeta > t, \, X_\zeta = 0\}}{\bP^x\{ X_\zeta = 0\}}
=
\frac{\bP^x[\ind\{X_t = y\} e^{-\lambda t} \lambda r_\lambda(y,0)]}{\lambda r_\lambda(x,0)},
\]
where $r_\lambda(u,v) := \int_0^\infty e^{-\lambda t} \bP^u\{X_t = v\} \, dt$.

Assume that $\alpha < \beta$.  Note that
$\lim_{\lambda \downarrow 0} r_\lambda(u,v) = r_0(u,v)$,
where $r_0(u,v) := \int_0^\infty \bP^u\{X_t = v\} \, dt$
satisfies
\[
r_0(u,v)
=
\begin{cases}
r_0(v,v) = r_0(0,0),& \quad  \text{if $u \le v$}, \\
\left(\frac{\alpha}{\beta}\right)^{u-v} r_0(v,v) = \left(\frac{\alpha}{\beta}\right)^{u-v} r_0(0,0),
& \quad  \text{if $u > v$}.
\end{cases}
\]
Therefore, as $\lambda \downarrow 0$ the Markov process $(X_t^\lambda)_{t \ge 0}$
converges to a Markov process $(X_t^0)_{t \ge 0}$ with
\[
\bP^x\{X_t^0 = y\}
=
\lim_{\lambda \downarrow 0} \bP^x\{X_t^\lambda = y\}
=
\frac{\bP^x\{X_t = y\} \left(\frac{\alpha}{\beta}\right)^{y_+}}{\left(\frac{\alpha}{\beta}\right)^{x_+}},
\]
where $x_+:=\max(x,0)$. Let $\cG$ be the infinitesimal generator of $(X_t^0)_{t \ge 0}$.  For a function $f:\bZ \to \bR$ we have
\[
\begin{split}
\cG f(x)
& =
\frac{
\alpha f(x-1) \left(\frac{\alpha}{\beta}\right)^{(x-1)_+}
+
\beta f(x+1) \left(\frac{\alpha}{\beta}\right)^{(x+1)_+}
- (\alpha + \beta) f(x) \left(\frac{\alpha}{\beta}\right)^{x_+}
}
{
\left(\frac{\alpha}{\beta}\right)^{x_+}
} \\
& =
\begin{cases}
\beta f(x-1) + \alpha f(x+1) -  (\alpha + \beta) f(x),& \quad  \text{if $x > 0$},\\
\alpha f(x-1) + \alpha f(x+1) -  (\alpha + \beta) f(x),& \quad  \text{if $x = 0$},\\
\alpha f(x-1) + \beta f(x+1) -  (\alpha + \beta) f(x),& \quad  \text{if $x < 0$}.
\end{cases}
\end{split}
\]
In other words, $(X_t^0)_{t \ge 0}$ is obtained by taking the Markov process $(Y_t)_{t \ge 0}$
with the following jump rates
\begin{itemize}
\item
$x \to x-1$ at rate $\alpha$ when $x < 0$,
\item
$x \to x+1$ at rate $\beta$ when $x < 0$,
\item
$x \to x-1$ at rate $\alpha$ when $x = 0$,
\item
$x \to x+1$ at rate $\alpha$ when $x = 0$,
\item
$x \to x-1$ at rate $\beta$ when $x > 0$,
\item
$x \to x+1$ at rate $\alpha$ when $x > 0$,
\end{itemize}
and killing this process at rate $\beta - \alpha$ when it is in state $0$.
The process $(Y_t)_{t \ge 0}$ is pushed upwards when it is negative and downwards when
it is positive and is analogous to the ``bang--bang Brownian motion'' or ``Brownian motion with alternating drift''
of \cite{GS00, Bor02, RY09}
that, for some $\mu > 0$, evolves like Brownian motion with drift $+\mu$ when it is negative and
like Brownian motion with drift $-\mu$ when it is positive. 

Note that $(X_t)_{t \ge 0}$ started at $X_0=+1$ hits the state $0$
with probability $\frac{\alpha}{\beta}$ and wanders off to $+\infty$ without
hitting the state $0$ with probability
$\frac{\beta-\alpha}{\beta}$, and that $(X_t)_{t \ge 0}$ started at $X_0=+1$, conditioned
to hit the state $0$ and killed when it does so evolves like the process $(Y_t)_{t \ge 0}$
started at $Y_0=+1$ and killed when it hits the state $0$.

Let $(W^{n,-})_{n \in \N}$ (respectively, $(W^{n,+})_{n \in \N}$)
be an i.i.d. sequence of killed paths with common distribution that
of the Markov process that starts in the state $0$, jumps at rate $\alpha$
to the state $-1$ (respectively, $+1$), and then evolves like the process $(Y_t)_{t \ge 0}$
started at $-1$ (respectively, $+1$) and killed when it hits the state $0$.
Define $(W^{n,\infty})_{n \in \N}$ to be an i.i.d. sequence of paths with common distribution that
of the Markov process that starts in the state $0$,
jumps to the state $+1$ at rate $\beta - \alpha$, and thereafter evolves like
the process $(X_t)_{t \ge 0}$ started at $+1$ and conditioned never to hit $0$.
Suppose further that these three sequences are independent.
Put $T_n^- := \inf\{t \ge 0 : W^{n,-}_t \ne 0\}$ and define $T_n^+$ and $T_n^\infty$ similarly.
Set
\[
W^n :=
\begin{cases}
W^{n,-},& \quad  \text{if $T_n^- = T_n^- \wedge T_n^+ \wedge T_n^\infty$},\\
W^{n,+},& \quad  \text{if $T_n^+ = T_n^- \wedge T_n^+ \wedge T_n^\infty$},\\
W^{n,\infty},& \quad  \text{if $T_n^\infty = T_n^- \wedge T_n^+ \wedge T_n^\infty$},
\end{cases}
\]
and
\[
\tilde W^n :=
\begin{cases}
W^{n,-},& \quad  \text{if $T_n^- = T_n^- \wedge T_n^+$},\\
W^{n,+},& \quad  \text{if $T_n^+ = T_n^- \wedge T_n^+$}.\\
\end{cases}
\]
We see that
$(X_t)_{t \ge 0}$ starting at $X_0 = 0$ is obtained by concatenating the excursion paths
$W^1, W^2, \ldots, W^N$, where
$N := \inf\{n : T_n^\infty = T_n^- \wedge T_n^+ \wedge T_n^\infty\}$,
and
$(Y_t)_{t \ge 0}$ starting at $Y_0 = 0$ is obtained by concatenating the excursion paths
$\tilde W^1, \tilde W^2, \ldots$
Observe that $N$ takes the value $n$ with probability
$\left(\frac{2 \alpha}{\alpha + \beta}\right)^{n-1} \frac{\beta - \alpha}{\alpha + \beta}.$

Let $(W^{n,\pm})_{n \in \N}$ be i.i.d. with $W^{n,\pm}$ distributed as $W^n$
conditional on $W^n$ being either $W^{n,-}$ or $W^{n,+}$ (that is, conditional on
$T_n^\infty > T_n^- \wedge T_n^+ \wedge T_n^\infty$).  Note that $W^{n, \pm}$
starts in the state $0$, jumps at rate $\alpha + \beta$, jumps to
state $-1$ (respectively, $+1$) with probability $\frac{1}{2}$, and thereafter
evolves like $(Y_t)_{t \ge 0}$ killed when it first hits the state $0$.
On the other hand, $\tilde W^n$ starts in the state $0$, jumps at rate $2 \alpha$, jumps to
state $-1$ (respectively, $+1$) with probability $\frac{1}{2}$, and thereafter
evolves like $(Y_t)_{t \ge 0}$ killed when it first hits the state $0$.

It follows that if we kill
the process $(Y_t)_{t \ge 0}$ at rate $\beta-\alpha$ when it is in the state $0$, then
the resulting process has the same distribution as the concatenation of the paths
$W^{1,\pm}, \ldots, W^{N'-1,\pm}$, where $N'$ is an independent random variable
that takes the value $n$ with probability
$\left(\frac{2 \alpha}{\alpha + \beta}\right)^{n-1} \frac{\beta - \alpha}{\alpha + \beta}$,
concatenated with a final independent path that is constant at $0$ and is killed at rate $\alpha + \beta$.

Let $\rho_n := T_n^- \wedge T_n^+ \wedge T_n^\infty$ be the amount of time that $W^n$ spends in the state $0$
(so that $\rho_n$ has an exponential distribution with rate $\alpha + \beta$),
$\sigma_n$ be the amount of time that $W^n$ spends in states other than $0$,
and $(\tau_n)_{n \in \N}$ be a sequence of i.i.d. random variables
with a common distribution that is exponential with rate $\lambda$.
We see that $(X_t)_{0 \le t < \zeta}$ is obtained by concatenating the paths
$\hat W^1, \ldots \hat W^M$, where $\hat W^n$ is $W^n$ killed at $\tau_n \wedge (\rho_n + \sigma_n)$ and
$M := \inf\{n : \tau_n < \rho_n + \sigma_n\} \le N$.

Write $\rho_n^\pm$ for the amount of time that $W^{n,\pm}$ spends in the state $0$
(so that $\rho_n^\pm$ has an exponential distribution with rate $\alpha + \beta$) and
$\sigma_n^\pm$ for the amount of time that $W^{n,\pm}$ spends in the states other
than $0$.
Then, 
\[
\begin{split}
& \bP\{W^1 \in dw^1, \ldots, W^{m-1} \in dw^{m-1}, \, \tau_m < \rho_m, \, \tau_m \in dt, M=m\} \\
& \quad  =
\left(\frac{2 \alpha}{\alpha + \beta}\right)^{m-1} \prod_{k=1}^{m-1}
\bP\left[e^{-\lambda(\rho_k^\pm + \sigma_k^\pm)} \ind\{W^{k,\pm} \in dw^k\}\right]
\frac{\lambda}{\lambda +  \alpha + \beta} \\
& \qquad ( \alpha + \beta + \lambda) \, e^{-( \alpha + \beta + \lambda) t} \, dt. \\
\end{split}
\]
Therefore
\[
\begin{split}
& \lim_{\lambda \downarrow 0}
\bP\{W^1 \in dw^1, \ldots, W^{m-1} \in dw^{m-1}, \, \tau_m < \rho_m, \, \tau_m \in dt, M=m
\, | \,
\tau_M < \rho_M\} \\
& \quad =
\left(\frac{2 \alpha}{\alpha + \beta}\right)^{m-1} \frac{\beta-\alpha}{\alpha+\beta}
\prod_{k=1}^{m-1}
\bP\left[\ind\{W^{k,\pm} \in dw^k\}\right]
e^{-( \alpha + \beta) t} \, dt \\
\end{split}
\] 
so that $(X_t)_{0 \le t < \zeta}$ started at $X_0 = 0$
and conditioned on $\{X_\zeta = 0\}$
converges in distribution as $\lambda \downarrow 0$ to a process 
that is distributed as the concatenation of
$W^{1,\pm}, \ldots, W^{N^* - 1,\pm}$, where $N^*$ is an independent random variable
with the same distribution as $N$, concatenated with a final independent path that is constant
at $0$ and killed at rate $ \alpha + \beta$.

Hence $(X_t)_{0 \le t < \zeta}$ started at $X_0 = 0$ and conditioned on $\{X_\zeta = 0\}$
has the same distribution in the limit $\lambda \downarrow 0$
as $(X_t)_{t \ge 0}$ killed at the time the process leaves the state $0$ for the last time and,
moreover, this distribution is the same as that of  $(Y_t)_{t \ge 0}$ started at $Y_0 = 0$
and killed at rate $\beta - \alpha$ in state $0$.
\end{example}

Our aim in this paper is to show that results analogous to those obtained for the
continuous--time simple random walk in Example \ref{ex:RW_1} hold in great generality; specifically,
if we condition a transient Borel right process to be in a fixed regular 
state $a$ at some independent exponential time $\zeta$,
kill the process at $\zeta$, and let the rate parameter of $\zeta$ go to $0$, then the Borel right process looks
like a certain recurrent Borel right process process that is killed according to an appropriate mechanism when it is
in the state $a$.  Moreover, the limit of the killed Borel right process evolves like the original process
conditioned to hit the point $a$ after which it behaves as the original process until it is killed
at the last time the original process leaves the state $a$.

We will, of course, require certain conditions.  The transient Borel right process  must have positive probability of hitting
the state $a$ from any starting point and we will also need the existence of a suitable local time at $a$
in order to make sense of the idea of conditioning the Borel right process
on being in state $a$ at time $\zeta$ when the Lebesgue measure of the set of times that the process
spends in $a$ is almost surely zero (and so the event on which we are conditioning has probability zero).

The paper is organized as follows.

The {\em Campbell measure} associated with a random measure $M$ such that $0 < \bP[M(\bR_+)] < \infty$
 is the probability measure $\bar \bP$ on $\Omega \times \bR_+$
given by 
\[
\bar \bP(A \times B) := \frac{\bP[\ind_A M(B)]}{\bP[M(\bR_+)]}.
\]
In Section \ref{s:conditioning_exponential} we  establish the connection between Campbell measures and the limit
as $\lambda \downarrow 0$ of conditioning a random set to contain an independent exponential
random variable with rate parameter $\lambda$.

%Campbell measures for the local times of regenerative sets are explored in Section \ref{s:regen}. If $\tilde S$ is a delayed finite regenerative and $\tilde M$ is a finite measure associated to it (see Section \ref{s:regen}) we show, as a result of Theorem \ref{t:Poisson}, that the distribution of $\tilde S \cap [0,\xi]$ under the Campbell measure associated to $\tilde M$ is simply the distribution of $\tilde S$ (under the original probability measure $\Pr$) conditioned to be nonempty. If $X$ is a strong Markov process with state space $E$ and $a\in E$ is a regular point then the closure of $\{t\geq 0: X_t=a\}$ is a delayed finite regenerative set. This then implies that the distribution of $\{t\geq 0: X_t=a\}$ under the Campbell measure associated with the corresponding delayed regenerative set local time is the distribution of $\{t\geq 0: X_t=a\}$ conditioned to be nonempty.

%Under some mild assumptions on the process $X$ there exists a random measure $M$ such that under $\Pr^x$ for any $x\in E$ the random measure $M$ is the delayed regenerative set local time at $a$. 

We start discussing Borel right processes in Section \ref{s:general}.
For such a process $X$ and $a\in E$ let $T_a := \inf\{t > 0 : X_t = a\}$
and $K_a := \sup\{t \ge 0 : X_t = a\}$ be the first and last hitting times of $a$, 
where we adopt
the usual conventions that $\inf \emptyset = +\infty$ and $\sup \emptyset = 0$.
Our starting point is the following result which we prove in Section~\ref{s:general}.
Here $\xi: \Omega \times \bR_+ \to \bR_+$ is given by $\xi(\omega,t) = t$.

\begin{theorem}\label{t:main_1}
Let $X$ be a Borel right process
with Lusin state space $E$.  Suppose that $a \in E$ is such that  
\begin{itemize}
\item $\bP^a\{K_a < \infty\} = 1$, 
\item $\bP^a\{T_a = 0\} = 1$,
\item $\bP^x\{T_a < \infty\} > 0$ for all $x \in E$.
\end{itemize}
If $Z$ is a nonnegative $\cF_t$-measurable random variable for some $t \ge 0$, then
\[
\bar \bP^x[Z \ind\{\xi > t\}]
=
\frac{1}{\bP^x\{T_a<\infty\}} \bP^x\left[Z \bP^{X_t}\{T_a<\infty\}\right],
\]
where $\bar \bP^x$ is the Campbell measure associated with the local time of $X$ at $a$.
Moreover,
the distribution of $(X_t)_{0 \le t < \xi}$ under the Campbell measure $\bar \bP^x$
is the same as the distribution of $(X_t)_{0 \le t < K_a}$ under $\bP^x$ conditional
on $\{T_a < \infty\}$.
\end{theorem}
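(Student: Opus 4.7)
The plan is to unpack the definition of the Campbell measure directly. Writing $L = (L_s)_{s \ge 0}$ for the local time of $X$ at $a$, the definition of $\bar\bP^x$ together with the identification $\xi(\omega,t) = t$ gives
\[
\bar\bP^x[Z\ind\{\xi > t\}]
=
\frac{\bP^x[Z(L_\infty - L_t)]}{\bP^x[L_\infty]}.
\]
Both expectations on the right can be evaluated separately: the numerator by the Markov property at time $t$, and the denominator by the strong Markov property at $T_a$. The second statement will then follow by matching finite-dimensional distributions.

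Set $c := \bP^a[L_\infty]$. The hypotheses $\bP^a\{T_a=0\}=1$ and $\bP^a\{K_a<\infty\}=1$, together with the standard fact that the right-continuous inverse of $L$ under $\bP^a$ is a subordinator, imply that $c \in (0,\infty)$ (indeed $L_\infty$ is exponentially distributed under $\bP^a$). Since $L$ does not increase before $T_a$, $L_{T_a} = 0$ on $\{T_a<\infty\}$, and the strong Markov property at $T_a$ yields
\[
\bP^x[L_\infty]
=
\bP^x\!\left[\ind\{T_a<\infty\}(L_\infty - L_{T_a})\right]
=
c\,\bP^x\{T_a<\infty\}.
\]
On the other hand, applying the simple Markov property at time $t$ to the $\cF_t$-measurable random variable $Z$, and using that $L_\infty - L_t$ under $\bP^x(\,\cdot\mid\cF_t)$ has the law of $L_\infty$ under $\bP^{X_t}$,
\[
\bP^x[Z(L_\infty - L_t)]
=
\bP^x\!\left[Z\,\bP^{X_t}[L_\infty]\right]
=
c\,\bP^x\!\left[Z\,\bP^{X_t}\{T_a<\infty\}\right].
\]
Dividing these identities gives the first claim, with the unknown constant $c$ cancelling.

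For the second assertion, observe that $\{K_a > t\}$ agrees up to a null set with $\{T_a\circ\theta_t < \infty\}$, so the Markov property at $t$ gives $\bP^x[\ind\{K_a > t\}\mid\cF_t] = \bP^{X_t}\{T_a<\infty\}$. Hence for every $t \ge 0$ and every bounded $\cF_t$-measurable $Z$,
\[
\bP^x[Z\ind\{K_a > t\}\mid T_a<\infty]
=
\frac{\bP^x[Z\,\bP^{X_t}\{T_a<\infty\}]}{\bP^x\{T_a<\infty\}}
=
\bar\bP^x[Z\ind\{\xi > t\}]
\]
by the first part. Since expressions of this form determine the law of a right-continuous path killed at a stopping-time-like lifetime, the law of $(X_s)_{0 \le s < \xi}$ under $\bar\bP^x$ coincides with the law of $(X_s)_{0 \le s < K_a}$ under $\bP^x(\,\cdot\mid T_a < \infty)$. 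I expect the main obstacle to be the foundational work that sits behind the Markov-property manipulations: constructing the local time $L$ as a well-behaved continuous additive functional, verifying that $c \in (0,\infty)$ from the excursion-theoretic assumptions, and justifying interchange of conditioning with the tail mass $L_\infty - L_t$ in the Borel right process setting. Once these pieces are in place, the remaining argument is essentially bookkeeping.
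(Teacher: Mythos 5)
Your argument is correct and follows essentially the same route as the paper: the first identity is the Markov property applied to the tail mass of the additive functional $L$ together with the strong Markov property at $T_a$ to factor $\bP^x[L_\infty]=\bP^a[L_\infty]\,\bP^x\{T_a<\infty\}$ (this is exactly Proposition~\ref{p:Campbell_additive_functional} plus the factorization in the paper's proof). For the last-exit identification the paper simply cites the co-optional-time result after (62.24) in \cite{S88}, whereas you verify the same identity $\bP^x[\ind\{K_a>t\}\mid\cF_t]=\bP^{X_t}\{T_a<\infty\}$ directly; this is the same computation that underlies the cited result, so the two proofs coincide in substance.
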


This theorem says heuristically that if $\kappa$ is an independent random variable that has an exponential distribution with rate parameter $\lambda$, then the distribution of $(X_t)_{0\leq t<\kappa}$ under $\Pr^x$ conditional on the event $\{X_\kappa=a\}$ converges as $\lambda\downarrow 0$ to the distribution of $(X_t)_{0\leq t < K_a}$ under $\Pr^x$ conditional on the event $\{T_a<\infty\}$.

We discuss excessive functions and general Doob $h$-transforms for Borel right processes
in Section \ref{s:doob_general}.

In Section \ref{s:bangbang} we construct a generalization of the bang-bang Brownian 
motion or Brownian motion with alternating drift \cite{GS00, Bor02, RY09} 
in which Brownian motion is
replaced by a general Borel right process $X$ with a regular state $a$. 
We use the notion of resurrected Markov processes 
(see \cite{meyer75, Fitz91} and Example 5.14 from \cite{FG06}).
The general bang-bang process is a Markov process that behaves 
like $X$ conditioned to hit $a$ 
until it hits $a$ and then looks like a process started at $a$ that can be built
from the same Poisson point process of excursions from $a$ as $X$ except that
only excursions of finite length are used (so the process keeps returning to
$a$).

As a consequence of these constructions we get the following result for general Borel right processes
which we prove in Section~\ref{s:bangbang_2}.

\begin{theorem}\label{t:main_2}
Let $X$ be a Borel right process with a Lusin state space $E$ and let $a\in E$. 
Suppose that $a \in E$ is such that
\begin{itemize}
\item $\bP^a\{K_a < \infty\} = 1$,
\item $\Pr^a\{T_a=0\}=1$,
\item $\Pr^x\{T_a<\infty\}>0$ for all $x\in E$.
\end{itemize}
Suppose, moreover, that
the resolvent $(R_\lambda)_{\lambda > 0}$ of $X$ has a density with respect to a measure $m$.

Then for any $x \in E$ the distribution of $(X_t)_{0 \le t < \xi}$ 
under the Campbell measure $\bar \bP^x$
associated with the local time at $a$ is that of the recurrent Borel right process $X^b$ 
constructed in Section \ref{s:bangbang_2} killed when the local time of $X^b$ at $a$ exceeds an 
independent exponential random variable with rate parameter equal to the It\^o excursion measure mass of the infinite excursions of $X$ from $a$.
\end{theorem}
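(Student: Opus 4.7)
The plan is to combine Theorem~\ref{t:main_1} with an excursion-theoretic Poisson thinning argument. By Theorem~\ref{t:main_1}, the distribution of $(X_t)_{0 \le t < \xi}$ under $\bar\bP^x$ coincides with that of $(X_t)_{0 \le t < K_a}$ under $\bP^x(\,\cdot \mid T_a < \infty)$. It therefore suffices to show that this second law equals the law of $X^b$ started at $x$ and killed when its local time at $a$ first exceeds an independent exponential random variable $\eta$ of rate $c := \nu^a(\{w : \zeta_w = \infty\})$, where $\nu^a$ is the It\^o excursion measure of $X$ at $a$.

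I would split the comparison at the first hit of $a$ and invoke the strong Markov property at $T_a$. On the pre-$T_a$ piece, a standard conditioning argument identifies $(X_t)_{0 \le t < T_a}$ under $\bP^x(\,\cdot \mid T_a < \infty)$ as the Doob $h$-transform of $X$ killed at $T_a$ with $h(y) = \bP^y\{T_a < \infty\}$, which is well defined because $h > 0$ everywhere by hypothesis; this is exactly ``$X$ conditioned to hit $a$'', and by the construction of $X^b$ in Section~\ref{s:bangbang} it coincides with the pre-$T_a$ law of $X^b$ started at $x$. Also, $X^b$ cannot be killed before $T_a$, since its local time at $a$ vanishes on $[0,T_a)$. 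After glueing at $T_a$, the problem reduces to the case $x = a$: showing that $(X_t)_{0 \le t < K_a}$ under $\bP^a$ has the same law as $X^b$ started at $a$ and killed when its local time first exceeds $\eta$.

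This last identification is pure excursion theory. Under $\bP^a$, indexed by the local time $\ell_t$ at $a$, the excursions of $X$ away from $a$ form a Poisson point process on $\bR_+$ with intensity measure $d\ell \otimes \nu^a$. Writing $\nu^a = \nu^a_f + \nu^a_\infty$ for the decomposition into finite and infinite excursions, the last visit $K_a$ is reached at local time $\eta$, the time of the first atom in the thinned PPP of intensity $d\ell \otimes \nu^a_\infty$; hence $\eta \sim \mathrm{Exp}(c)$, is independent of the finite-excursion PPP, and the path of $X$ on $[0,K_a)$ is the concatenation of the finite excursions arriving at local times in $[0,\eta)$. By construction, $X^b$ started at $a$ is assembled from an independent Poisson point process of intensity $d\ell \otimes \nu^a_f$ via the resurrection procedure of \cite{meyer75, Fitz91}, so killing $X^b$ the moment its local time first exceeds an independent $\mathrm{Exp}(c)$ variable produces exactly the same concatenation of finite excursions, by the superposition and thinning properties of Poisson point processes.

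The principal technical obstacle is the last step: carefully matching the intrinsic local time of $X^b$ with the local time of $X$ at $a$, and justifying that the excursion-to-path reconstruction commutes with the thinning operation. Concretely, one has to check that the local-time clock of $X^b$ furnished by the construction of Section~\ref{s:bangbang_2} agrees, up to the same normalisation as used to define $\nu^a$, with the local-time clock governing the excursion PPP of the original $X$, so that the same exponential variable $\eta$ simultaneously triggers the killing of $X^b$ and the first appearance of an infinite $X$-excursion. The assumption that the resolvent of $X$ has a density with respect to $m$ is precisely what guarantees the existence and good behaviour of this local time and thereby the well-posedness of the excursion machinery on both sides.
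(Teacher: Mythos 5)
Your proposal is correct and shares the paper's backbone: both arguments start from Theorem~\ref{t:main_1}, which identifies the Campbell-measure law of $(X_t)_{0 \le t < \xi}$ with that of $(X_t)_{0 \le t < K_a}$ under $\bP^x(\,\cdot \mid T_a < \infty)$, and both ultimately rest on the excursion-theoretic fact that the local time accumulated at $a$ before the first infinite excursion is exponential with rate $n(U_\infty)$. You diverge in the second half. The paper exploits the resurrection construction directly: the first segment of $X^b$ is, by definition, $X^h$ run until its death, which Theorem~\ref{t:main_1} already identifies with the target law, so all that remains is to note that the local time of $X^b$ at the first resurrection epoch is distributed as $L_{K_a}$ under $\bP^a$, hence exponential with rate $n(U_\infty)$. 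You instead split at $T_a$, reduce to $x=a$, and rebuild both sides from their excursion point processes, matching them by Poisson superposition and thinning. Your route has the virtue of making explicit the independence of the killing level from the finite-excursion process --- something the statement of the theorem requires and the paper's two-sentence conclusion rather glosses over --- but it carries the extra burden you yourself flag: one must verify that the excursion measure of $X^b$ at $a$, in the local-time normalisation of Section~\ref{s:bangbang_2}, is exactly the restriction of $n$ to the finite excursions $U_0$. That step is unnecessary in the paper's route, where the identification of the first segment is immediate from the construction; to close it in yours, observe that the excursions of one $\bP^a$-copy of $X^h$ before its death are precisely the finite excursions of $X$ before $K_a$, so concatenating i.i.d.\ copies restores, by memorylessness of the exponential, a full Poisson point process of intensity $d\ell \otimes n|_{U_0}$. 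Modulo writing out that verification, your proof is sound.
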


Sections \ref{s:doob} and \ref{s:doob_generator} contain a 
 study of $h$-transforms for general transient one-dimensional diffusions. 
 After recalling the characteristics of a one-dimensional diffusion 
 -- the scale function, speed measure, and killing measure 
 -- we show in Theorem \ref{t:h_diffusion_chars} how these characteristics 
 change under an $h$-transform. This fact is well-known in the folklore, 
 but we present a proof because we were not able to find one in the literature 
 that treats the general case we need. We then characterize the 
 generator of the $h$-transformed diffusion.

Section \ref{s:prop} considers the bang-bang construction for the special
case of one-dimensional diffusions and Section \ref{s:fixedpoint} investigates
the generator of the $h$-transformed process of Theorem \ref{t:main_1} when
the process $X$ is a one-dimensional diffusion. 
In Section \ref{s:another_conditioning} we briefly discuss a different way of conditioning a Markov process to be in a fixed state at a large random time.

%In Subsection \ref{s:OU} we explore the bang-bang process associated to a transient Ornstein-Uhlenbeck process and show that this is not, as one might expect, a recurrent Ornstein-Uhlenbeck process.
 %
%Section \ref{s:fixedpoint} fully characterizes what happens when we condition a diffusion $(Y_t)_{t\geq 0}$ to be in a set of the form $\{a\}$ at the independent exponential time $\zeta$, kill the process at $\zeta$,  and then let the rate of $\zeta$ go to zero. 
%
%Theorem \ref{t:cond_point} shows that if we do this sort of conditioning with a transient diffusion $(\hat Y_t)_{t\geq 0}$ with scale function $s(x)$, speed measure density $m'(x)$,
%and the set used for the conditioning is $\{a\}$ then the limiting diffusion $(\hat Y^h_t)_{t\geq 0}$ has generator
%\[
%\cG^h f(x) = \left\{
               %\begin{array}{ll}
                 %\frac{1}{h^2(y)m'(y)}\left(\frac{h^2(y)}{s'(y)}f'(y)\right)', & \hbox{$y\neq a$,} \\
                 %\frac{(f')^-(a)}{s'(a)m'(a)} + \frac{2\int_a^r m'(y)dy}{h(a)m'(a)}-\frac{s''(a)\int_a^rm'(y)\,dy}{m'(a)s'(a)}, & \hbox{$y=a$} \\
               %\end{array}
             %\right.
%\]
%where
%\be
%h(x)= \left\{
    %\begin{array}{ll}
      %s(x), & \hbox{$x \le a$;} \\
       %s(a), & \hbox{$x \ge a$.}
    %\end{array}
  %\right.
%\ee
%
%As an example, in Theorem \ref{t:bang_bang_BM}, we do this type of conditioning with a Brownian motion with negative drift $(W_t-\mu t)_{t\geq 0}$ and the set $\{0\}$. We get in the limit a process that looks like bang-bang Brownian motion,
%\[
%dZ_t = dW_t - \mu \sgn(Z_t)dt,
%\]
%killed at the origin according to $\mu\ell(t,0)$ where $\ell(t,0)$ is the local time of $(Z_t)_{t\geq 0}$ at 0.

\section{Campbell measures}\label{s:conditioning_exponential}

Suppose that on some probability space $(\Omega, \cF, \bP)$
we have a random set $S \subset \bR_+$ such that
$0 < \bP[|S|] < \infty$, where $|\cdot|$ is Lebesgue measure. 
Let $\nu_\lambda$ be the exponential distribution on
$\bR_+$ with rate $\lambda$.
Define $\xi$ to be the canonical random variable on
$(\bR_+, \cB(\bR_+), \nu_\lambda)$.
With the usual abuse of notation, we can think of $S$ and $\xi$ as being
defined on
$(\Omega \times \bR_+, \cF \otimes \cB(\bR_+), \bP \otimes \nu_\lambda)$.
Define the probability measure $\bar \bP_\lambda$ on
$(\Omega \times \bR_+, \cF \otimes \cB(\bR_+))$ by
\[
\bar \bP_\lambda(A \times B)
:=
\frac{
\bP \otimes \nu_\lambda\{(\omega,t) : \omega \in A, \, t \in B \cap S(\omega)\}
}
{
\bP \otimes \nu_\lambda\{(\omega,t) : t \in S(\omega)\}
};
\]
that is, $\bar \bP_\lambda$ is $\bP \otimes \nu_\lambda$
conditioned on the event $\{\xi \in S\}$.  Note that
\[
\bar \bP_\lambda(A \times B)
=
\frac{
\bP\left[\ind_A \int_{B \cap S} \lambda e^{-\lambda t} \, dt\right]
}
{
\bP\left[\int_S \lambda e^{-\lambda t} \, dt\right]
}.
\]
Letting $\lambda \downarrow 0$ we get the probability measure
\[
\bar \bP(A \times B)
:=
\frac{
\bP[\ind_A |B \cap S|]
}
{
\bP[|S|]
}
=
\frac{
\bP[\ind_A M(B)]
}
{
\bP[M(\bR_+)]
},
\]
where $M$ is the random measure given by
$M(C) := |C \cap S|$.   We can think of the probability measure
$\bar \bP$ as describing what happens asymptotically when we condition
on $S$ containing a large exponential time.

More generally, if $M$ is an arbitrary random measure with
$0 < \bP[M(\bR_+)] < \infty$, then simply define $\bar \bP$ by
\be\label{e_bar_P_1}
\bar \bP(A \times B) :=\frac{
\bP[\ind_A M(B)]
}
{
\bP[M(\bR_+)]
}.
\ee
The probability measure $\bar \bP$ is usually called the
{\em Campbell measure} associated with $M$.  If $M$
is in some sense spread out evenly on its support $S$,
then we can still think of $\bar \bP$ as describing what happens when we condition
on $S$ containing a large exponential time.

\begin{example}
\label{E:competing_exponentials}
Consider the random measure $M := |\cdot \cap [0,\kappa)|$,
where $\kappa$ has an exponential distribution with rate parameter $\eta$.
By definition,
\[
\bar \bP\{\xi > x\}
= \frac{\bP[M((x,\infty))]}{\bP[M(\bR_+)]}
= \frac{e^{-\eta x} \frac{1}{\eta}}{\frac{1}{\eta}}
= e^{-\eta x},
\]
and so the distribution of $\xi$ under the Campbell measure $\bar \bP$ is the same
as the distribution of $\kappa$ under $\bP$.  According to our interpretation of
the Campbell measure, this result indicates that if $\zeta$ is a random variable
that is independent of $\kappa$ and has an exponential distribution with rate
parameter $\lambda$, then the distribution of $\zeta$ conditional on the event
$\{\zeta < \kappa\}$ should converge to the distribution of $\kappa$ as
$\lambda \downarrow 0$.  Indeed, by classical observations about
``competing exponentials'' recalled in the Introduction, the random variable $\zeta \wedge \kappa$
is independent of the event $\{\zeta < \kappa\}$
and has an exponential distribution with rate $\lambda + \eta$, so the
conditional distribution of $\zeta$ given the event $\{\zeta < \xi\}$
is exponential with rate $\lambda + \eta$ and this conditional distribution
converges to the distribution of $\kappa$ as $\lambda \downarrow 0$. 
\end{example}

%\begin{proof}
%We have
%\[
%\begin{split}
%\bP\{\zeta > t \, | \, \zeta < \xi\}
%& =
%\frac
%{\int_t^\infty \lambda e^{-\lambda z} e^{- \nu z} \, dz}
%{\int_0^\infty \lambda e^{-\lambda z} e^{- \nu z} \, dz} \\
%& = e^{-(\lambda + \nu) t}\\
%\end{split}
%\]
%and
%\[
%\bP\{\xi \wedge \zeta > t\}
%= \bP\{\xi > t\} \bP\{\zeta > t\}
%= e^{-\nu t} e^{-\lambda t}
%= e^{-(\lambda + \nu) t}.
%\]
%\end{proof}

\section{Markov processes and Campbell measures}\label{s:general}

In this section we introduce the assumptions used throughout the paper.
Let $((X_t), \Omega, \mathcal{F}, \Pr^x, (\theta_t),(\mathcal{F}_t))$ 
be a \textit{right process} (we sometimes denote the whole sextuple by $X$), 
see Chapter II:20 from \cite{S88},  with state space  
$E_\partial := E \cup \{\partial\}$, where $E$ is a Lusin topological space with 
Borel field $\mathcal{E}$ and $\partial$ is an adjoined cemetery state.
 Let $(P_t)_{t\geq0}$ and $(R_\lambda)_{\lambda>0}$ denote 
 the semigroup and the resolvent of $X$.

If $P_t f$  is $\mathcal{E}$-measurable whenever $f$ is a positive 
$\mathcal{E}$-measurable function and $t\geq 0$, then 
we say that $X$ is a \textit{Borel right process}.

%\begin{remark}
%A Borel right processes on a Lusin state space $E$ is a right continuous, 
%strong Markov process with no branch points and with a Borel measurable 
%semigroup.
%\end{remark}

Assume that we are in the canonical setting where $\Omega$ is the space of functions 
$\omega:\R_+\to E_\partial$ which are right continuous, and if 
$\zeta(\omega) := \inf\{t \ge 0: \omega(t) = \partial\}$, then $\omega(t) = \partial$ for
$t \ge \zeta(\omega)$.   Furthermore, $X_t(\omega):=\omega(t)$ and
$(\theta_t\omega)(s) := \omega(s+t)$.  Note that $\zeta$ is a {\em terminal time}; that is,
\begin{equation*}
\zeta = s + \zeta \circ \theta_s,
\end{equation*}
on the event $\{\zeta > s\}$ for all $s \ge 0$.
Let $\mathcal{F}_t^0$ be the natural filtration on
$\Omega$: $\mathcal{F}_t^0:=\sigma\{X_s: 0\leq s\leq t\}$.
Set $\mathcal{F}^0=\bigcup_{t}\mathcal{F}_t^0$ and for an initial law $\mu$ let $\mathcal{F}^\mu$ denote the completion of $\mathcal{F}^0$ relative to $\Pr^\mu$ and let $\mathcal{N}^\mu$ denote the $\Pr^\mu$-null sets in $\mathcal{F}^\mu$.

Set
\begin{itemize}
\item $\mathcal{F}:= \bigcap \left\{\mathcal{F}^\mu: \mu ~\text{is an initial law on}~E\right\}$.
\item $\mathcal{N} :=  \bigcap \left\{\mathcal{N}^\mu: \mu ~\text{is an initial law on}~E\right\}$.
\item $\mathcal{F}_t^\mu:= \mathcal{F}_t^0\vee \mathcal{N}^\mu$.
\item $\mathcal{F}_t := \bigcap \left\{\mathcal{F}_t^\mu: \mu~\text{is an initial law on}~E\right\}$.
\end{itemize}
The process $X$ is described by the probability family $(\Pr^x)_{x\in E}$ which satisfies
\[
\Pr^x\{X_0=x\}=1
\]
for all $x\in E$.

\begin{proposition}
\label{p:Campbell_additive_functional}
Consider a Borel right process
$X$ with state space $E$.
Suppose that the random measure $M$ on $\bR_+$ satisfies the following
conditions:
\begin{itemize}
\item
$M(\{0\}) = 0$,
\item
 $M((0,t])$ is $\cF_t$-measurable for all $t > 0$,
\item
$0 < \bP^x[M(\bR_+)] < \infty$ for all $x \in E$,
\item
$M = 0$, $\bP^\partial$-a.s.
\item
for all $s,t > 0$ and $x \in E$, $M((0,s+t]) = M((0,s]) + (M \circ \theta_s)((0,t])$ , $\bP^x$-a.s.
\end{itemize}
Then,  for any $t \ge 0$
and nonnegative $\cF_t$-measurable
random variable $Z$,
\[
\bar \bP^x[Z \ind\{\xi > t\}] = \frac{1}{\bP^x[M(\bR_+)]} \bP^x\left[Z \bP^{X_t}[M(\bR_+)]\right].
\]
\end{proposition}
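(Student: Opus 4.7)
The plan is to unpack the definition of the Campbell measure and then use the additivity hypothesis on $M$ together with the Markov property to shift the tail mass $M((t,\infty))$ over to the post-$t$ future, where the Markov property can be applied.

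First, I would observe that since $Z$ is $\cF_t$-measurable it depends only on $\omega$ and not on the time coordinate $\xi$. So, directly from the definition \eqref{e_bar_P_1} of $\bar \bP^x$ applied with the random measure $M$ (viewing $\ind\{\xi > t\}$ as the indicator of $\Omega \times (t,\infty)$), we get
\[
\bar \bP^x[Z \ind\{\xi > t\}] = \frac{\bP^x[Z \, M((t,\infty))]}{\bP^x[M(\bR_+)]}.
\]
So the task reduces to rewriting $\bP^x[Z \, M((t,\infty))]$ as $\bP^x[Z \, \bP^{X_t}[M(\bR_+)]]$.

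The second step is to convert $M((t,\infty))$ into a post-$t$ quantity. The additivity hypothesis $M((0,s+u]) = M((0,s]) + (M \circ \theta_s)((0,u])$ holds $\bP^x$-a.s. for each fixed $u$; letting $u \uparrow \infty$ by monotone convergence and using $M(\{0\}) = 0$, we obtain
\[
M(\bR_+) = M((0,t]) + (M \circ \theta_t)(\bR_+), \qquad \bP^x\text{-a.s.},
\]
and therefore $M((t,\infty)) = (M \circ \theta_t)(\bR_+)$ almost surely.

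The third step is to apply the Markov property. Since $Z$ is $\cF_t$-measurable and $(M \circ \theta_t)(\bR_+)$ is a functional of the post-$t$ path,
\[
\bP^x[Z \, M((t,\infty))] = \bP^x[Z \, (M \circ \theta_t)(\bR_+)] = \bP^x\!\left[Z \, \bP^{X_t}[M(\bR_+)]\right],
\]
where on the event $\{X_t = \partial\}$ the inner expectation vanishes by the hypothesis $M=0$, $\bP^\partial$-a.s. Combining with the identity from the first step yields the claim.

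I do not expect a genuine obstacle here: the only small point that needs care is justifying the passage $u \uparrow \infty$ in the additivity hypothesis (an exceptional $\bP^x$-null set must be handled uniformly in $u$, which works since we take a countable limit along rationals and use monotonicity), and verifying that $(M \circ \theta_t)(\bR_+)$ is measurable as a functional of the shifted path so that the Markov property applies — both routine given that $M((0,u])$ is $\cF_u$-measurable for every $u$.
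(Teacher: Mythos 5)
Your proof is correct and follows essentially the same route as the paper's: unpack the Campbell measure to get $\bP^x[Z\,M((t,\infty))]$, identify $M((t,\infty))$ with $(M\circ\theta_t)(\bR_+)$ via the additivity hypothesis, and conclude by the Markov property. The paper compresses this into a single displayed chain of equalities; your version merely supplies the (routine) justification for the limit $u\uparrow\infty$ and the measurability needed for the Markov property.
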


\begin{proof}
By the definition of  Campbell measure, the hypotheses on $M$
 and the Markov property,
\begin{equation*}
\begin{split}
\bar \bP^x[Z \ind\{\xi > t\}]
&=
\frac{
\bP^x[Z M((t,\infty))]
}
{
\bP^x[[M(\bR_+)]
}
=
\frac{
\bP^x[Z M \circ \theta_t (\bR_+)]
}
{
\bP^x[M(\bR_+)]
}\\
&=
\frac{1}{\bP^x[M(\bR_+)]} \bP^x\left[Z \bP^{X_t}[M(\bR_+)]\right].
\end{split}
\end{equation*}
\end{proof}

\bigskip
\noindent
{\bf Proof of Theorem~\ref{t:main_1}.}  The local time at $a$ is a random measure that satisfies the hypotheses
of Proposition~\ref{p:Campbell_additive_functional}.
By the hypotheses of the theorem, $\bP^x[M(\bR_+)] = \bP^x\{T_a < \infty\} \bP^a[M(\bR_+)]$ and so
\[
\bar \bP^x[Z \ind\{\xi > t\}]
=
\frac{1}{\bP^x\{T_a<\infty\}} \bP^x\left[Z \bP^{X_t}\{T_a<\infty\}\right]
\]
for $Z$ a nonnegative $\cF_t$-measurable random variable.

Observe that $\bP^x\{T_a<\infty\} = \bP^x\{0 < K_a < \infty\}$.  The random time $K_a$ is co-optional
and it follows from the remark after equation (62.24) of \cite{S88} that the distribution of 
$(X_t)_{0 \le t < \xi}$ under the Campbell measure $\bar \bP^x$
is the same as the distribution of $(X_t)_{0 \le t < K_a}$ under $\bP^x$ conditional
on $\{T_a < \infty\}$. \qed

\section{Excessive functions and Doob $h$-transforms}\label{s:doob_general}

%We denote the expectation of a random variable $F$ under $\Pr^x$ by $\bP^x F$ and sometimes by $\Pr^x F$.
Recall that a function $h :E\rightarrow \R_+\cup\{+\infty\}$ is \textit{excessive} if
the following two conditions are satisfied:
\begin{itemize}
\item[(1)]
\[
\Pr^x [h(X_t)] \leq h(x)
\]
for all $t\geq 0$ and $x\in E$.
\item[(2)]
\[
\lim_{t\downarrow 0} \Pr^x [h(X_t)] = h(x)
\]
for all $x\in E$.
\end{itemize}

\begin{remark}
Suppose that $M$ satisfies hypotheses of Proposition~\ref{p:Campbell_additive_functional}.  
Set $h(x) = \Pr^x [M(\bR_+)]$.
Observe that
$\Pr^x [h(X_t)]
=
\Pr^x[M \circ \theta_t(\bR_+)]
=
\Pr^x[M((t,\infty))]$
and it is clear that $h$ is excessive.
\end{remark}
\begin{example}
The function
\[
x\mapsto \Pr^x\{T_a<\infty\}
\]
is excessive.
\end{example}

The following result is well-known at various levels of generality.

\begin{theorem}\label{t:conditioning}
Let $((X_t), \Omega, \mathcal{F}, \Pr^x, (\theta_t),(\mathcal{F}_t))$ be a Borel right process on a Lusin space $E$ and let $(P_t)_{t\geq 0}$ be its Borel semigroup. Suppose $h:E\rightarrow \R_+$ is a positive Borel excessive function.
The operators $(P^h_t)_{t \geq 0}$ defined by
\[
P_t^h g(x) = \frac{1}{h(x)} P_t g h (x), \quad x\in E_h:= \{x\in E: 0<h(x)<\infty\}
\]
comprise a submarkovian semigroup that corresponds to a Borel right process with state space $E_\partial:= E_h\cup\{\partial\}$.
\end{theorem}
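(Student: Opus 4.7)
The plan is to verify the algebraic properties of $(P_t^h)_{t \ge 0}$ directly and then invoke the classical construction of the $h$-transformed process via a supermartingale change of measure, a construction that is available in the Borel right setting; the hard part is not the algebra but verifying that the resulting process inherits right-continuity and the strong Markov property.

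First I would check that $(P_t^h)_{t \ge 0}$ is a submarkovian semigroup of kernels on $E_h$. Submarkovianity is immediate from the first excessivity inequality: $P_t^h \ind(x) = h(x)^{-1} P_t h(x) \le 1$. The semigroup identity is a one-line computation,
\[
P_s^h(P_t^h g)(x) = \frac{1}{h(x)} P_s\!\left(h \cdot \frac{P_t(gh)}{h}\right)\!(x) = \frac{1}{h(x)} P_{s+t}(gh)(x) = P_{s+t}^h g(x),
\]
valid for $g$ nonnegative Borel on $E_h$ once we extend $g$ by $0$ on $E \setminus E_h$. Borel measurability of $P_t^h g$ on $E_h$ uses that $h$ is Borel, $E_h$ is Borel, and $P_t$ preserves Borel measurability by hypothesis. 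The pointwise right-continuity at $t = 0$ of $t \mapsto P_t^h g(x)$ for bounded continuous $g$ follows from the right-continuity of $P_t h$ at $0$ from the second excessivity condition, together with a straightforward dominated convergence argument after splitting $P_t(gh) - gh \cdot \ind\{P_th<\infty\}$ into the contributions from $g$ and from $h$.

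Next I would realize the semigroup as the transition function of a Borel right process. The standard device is to work on the canonical path space of $X$ and, for each $x \in E_h$, define a new law $\Pr^{h,x}$ by the prescription
\[
\frac{d\Pr^{h,x}}{d\Pr^x}\bigg|_{\mathcal{F}_t} \;=\; \frac{h(X_t)}{h(x)}\,\ind\{t < \zeta\}.
\]
Excessivity of $h$ makes the right-hand side a nonnegative $\Pr^x$-supermartingale, and the Markov property of $X$ gives consistency of these Radon–Nikodym derivatives. The computation
\[
\Pr^{h,x}\!\left[g(X_t)\,\ind\{t<\zeta\}\right] \;=\; \Pr^x\!\left[g(X_t)\,\frac{h(X_t)}{h(x)}\,\ind\{t<\zeta\}\right] \;=\; P_t^h g(x)
\]
shows that the coordinate process under $\Pr^{h,x}$ has semigroup $(P_t^h)_{t\ge 0}$, with the new lifetime equal to the first time the path leaves $E_h$ (sent to $\partial$ thereafter).

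The main obstacle will be the verification that under this change of measure the canonical process genuinely satisfies the Borel right axioms: right-continuous paths up to the new lifetime, the strong Markov property, and the Borel measurability requirements of the semigroup. Right-continuity transfers from $X$ because $\{h(X_t) < \infty\}$ holds for all $t$ almost surely (a standard consequence of excessivity) and the multiplicative martingale is itself right-continuous. The strong Markov property under $\Pr^{h,x}$ follows from the multiplicative factorization $h(X_{s+t})/h(x) = (h(X_s)/h(x)) \cdot (h(X_{s+t})/h(X_s))$ combined with the strong Markov property of $X$ under $\Pr^x$. All of these steps, including the careful treatment of the set $\{h \in \{0,\infty\}\}$ and the reduction from general excessive $h$ to bounded $h$ by stopping at $\{h > n\}$, are carried out in Sharpe's monograph \cite{S88} (in particular in the material surrounding Chapter 62 on multiplicative functionals and $h$-transforms), so after the algebraic verifications above we may appeal to those results to conclude.
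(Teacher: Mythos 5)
Your proposal is correct and ultimately rests on the same foundation as the paper's proof, which simply invokes Theorem 62.19 (and (62.23)) of Sharpe's monograph for the existence of the $h$-transformed right process; you additionally spell out the routine semigroup algebra and the supermartingale change of measure that underlie that citation. The extra verifications you give (submarkovianity, the Chapman--Kolmogorov identity after extending $g$ by zero off $E_h$, and the deferral of the right-process axioms to Sharpe's Chapter 62) are all sound, so this is essentially the paper's argument with more detail filled in.
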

\begin{proof}
By Theorem 62.19 from \cite{S88} (see also (62.23) in \cite{S88}) we know that $(P_t^h)_{t\geq 0}$ defines the semigroup of a right process on $E_\partial:= E_h\cup\{\partial\}$. It
is clear that this semigroup is Borel.
\end{proof}

\begin{remark}\label{r:doob}
The Markov process with the semigroup $(P_t^h)_{t \ge 0}$ of Theorem~\ref{t:conditioning} is called the Doob $h$-transform of the original Markov process
(with respect to the excessive function $h$).
If $a \in E$ is such that for all $x \in E$, $\bP^x\{T_a<\infty\} > 0$ and
$\bP^a\{T_a = 0\} = 1$ where $\kappa$
is an independent exponential random variable with rate parameter $\lambda$, then
we see from Theorem \ref{t:main_1} that the distribution under $\bP^x$ of $(X_t)_{0 \le t < \kappa}$ conditional on the event $\{X_\kappa = a\}$
converges as $\lambda \downarrow 0$
to the distribution under $\bQ^x$ of $(X_t)_{0 \le t < \zeta}$, where $\bQ^x$ is now the Doob $h$-transform distribution corresponding
to the excessive function $x \mapsto \bP^x[M(\bR_+)]$, where $M$ is the local time at $a$ or, equivalently, to the excessive function
$x \mapsto \bP^x\{T_a < \infty\}$.
\end{remark}

\section{Bang-bang processes and excursions}\label{s:bangbang}

\subsection{Brownian motion with negative drift}\label{s:bang-bang BM}

Suppose that $X$ is a Brownian motion with negative drift $-\mu$, $\mu>0$, and $a=0$ in the context of Theorem~\ref{t:main_1}.
Let $X^h$ be the Doob $h$-transform process corresponding to the excessive function
$x \mapsto \bP^x\{T_0 < \infty\}$.
Recall from Theorem~\ref{t:main_1} that the behavior of the process $X^h$ started at $0$ is what we see
if we start the process $X$ at $0$ and then kill it at the start of the first
infinite excursion away from $0$. We would like to show that this is the same as taking the bang-bang
Brownian motion that evolves as Brownian motion with drift $-\mu$ when it is positive
and as Brownian motion with drift $+\mu$ when it is negative, and killing that
bang-bang Brownian motion when the local time at $0$ exceeds an independent
exponential random variable with rate parameter $\mu$.

Consider excursions from the point $0$.
Formula (50.3) in Section VI.50 of \cite{RW00} gives that
\begin{equation}\label{e:entrance_law}
\int_0^\infty  e^{-\lambda t} n_t(x) dt = \frac{r_\lambda(0,x)}{\bP^a \int_0^\infty e^{-\lambda s} \,dL^0_s},
\end{equation}
where $n_t(x) dx$ is the entrance ``law" for the It\^o excursion measure and $\lambda R_\lambda(x,\cdot)$ is the $\Pr^x$ law of $X_T$ where $T$ is an independent exponential random variable with rate $\lambda$.
Note that
\begin{equation}\label{e:L_a}
\Pr^0 \int_0^\infty e^{-\lambda s} \,dL^0_s = r_\lambda(0,0) = \int_0^\infty e^{-\lambda s}p_s(0,0)\,ds
\end{equation}
where $r_\lambda(x,y)$ and $p_t(x,y)$ are the resolvent and transition densities of $X$ with respect to Lebesgue measure.
From \eqref{e:entrance_law}, \eqref{e:L_a} and
\begin{equation}\label{e:density}
\Pr^x\{W_T-\mu T\in dz\} = \frac{\lambda}{\sqrt{2\lambda+\mu^2}} e^{-\mu(z-x)-|z-x|\sqrt{2\lambda+\mu^2}}dz.
\end{equation}
for $W$ a Brownian motion and $T$ an exponential random variable with rate parameter $\lambda$,
\begin{equation}
\int_0^\infty  e^{-\lambda t} n_t(x) \, dt = \exp(- \mu x - |x| \sqrt{2 \lambda + \mu^2}).
\end{equation}
The positive excursions are all finite.  The probability that a Brownian motion with drift  $-\mu$ ever hits $0$ started from $x < 0$ is $ \exp(2 \mu x)$, and so the entrance law
$n_t^f(x) dx$ for the It\^o excursion measure on negative excursions of finite length satisfies
\begin{eqnarray*}
\int_0^\infty  e^{-\lambda t} n_t^f(x) dt &=&  \exp(- \mu x - |x| \sqrt{2 \lambda + \mu^2}) \exp(2 \mu x)\\
&=&  \exp(\mu x - |x| \sqrt{2 \lambda + \mu^2}).
\end{eqnarray*}
This is
\begin{equation*}
\int_0^\infty e^{-\lambda t} m_t(x) \, dt
\end{equation*}
where $m_t(x) dx$ is the entrance law for the It\^o excursion  measure on negative excursions 
for Brownian motion with drift $+ \mu$.

The rate at which infinite excursions come along in local time can be found by seeing that
\[
\begin{split}
& \int_{-\infty}^0 e^{- \mu x - |x| \sqrt{2 \lambda + \mu^2}} -  e^{\mu x - |x| \sqrt{2 \lambda + \mu^2}}\, dx \\
& \quad  = [(\sqrt{2 \lambda + \mu^2} - \mu]^{-1} - [(\sqrt{2 \lambda + \mu^2} + \mu]^{-1}\\
& \quad  = \mu/\lambda, \\
\end{split}
\]
and so the rate is $\mu$.

By the discussion around (50.7) in \cite{RW00}, if we have Brownian motion with drift $- \mu$, we start it below zero and we condition it to hit zero, then up to the time it hits zero we see a Brownian motion with drift $+\mu$.

The process $(Y_t)_{t \ge 0}$ defined via the SDE
\[
dY_t = dU_t - \mu \sgn{(Y_t)} \, dt
\]
for $\mu\in\R$ and $U_t$ a standard Brownian motion is called bang-bang Brownian motion or Brownian motion with alternating drift -- see \cite{GS00} and Appendix 1.15 in \cite{Bor02}.

Putting the above together it appears that $X^h$ started at $0$ is indeed a bang-bang Brownian motion
killed at $0$ according to local time with rate $\mu$.  There is, however, a missing ingredient in this identification.
We have not identified the process obtained by concatenating together in the usual way
the points in a Poisson process of positive and finite length negative excursions of Brownian motion with
drift $-\mu$ with a bang-bang Brownian motion.  We will take a slightly different route in the
remainder of this section to establish that $X^h$ is bang-bang Brownian motion suitably killed at $0$.

\subsection{Excursions of a Markov process from a regular point}

We briefly review some of the concepts from It\^o excursion theory that we need. 
We follow \cite{RW00} VI 42-50 and remark that the results there hold in our setting 
(see also \cite{Sal86a, Sal86b, ito71}). 

Suppose $X$ is a Borel right process with Lusin state space $E$. We assume the point $a\in E$ is a regular point, that is
\begin{equation*}
\Pr^a\{T_a=0\}=1
\end{equation*}
where
\begin{eqnarray*}
\mathcal{M} &=& \{t\geq 0: X_t=a\}\\
T_a &=& \inf\{t>0: t\in \mathcal{M}\}.
\end{eqnarray*}
One can then show that the function $\psi(x) :=\bP^x\left[e^{-T_a}\right]$ is the $1$-potential of some PCHAF (perfect, continuous, homogeneous, additive functional) $L$
\begin{eqnarray*}
\psi(x) = \bP^x \left[\int_0^\infty e^{-s} \, dL_s\right]
\end{eqnarray*}
for every $x\in E$. The additive functional
 $L$ is the local time of $X$ at $a$ and the set of points of increase of $L$ 
is exactly the closed random set $\mathcal{M}$.
\begin{remark}
Any PCHAF which grows only on $\mathcal{M}$ must be a multiple of $L$. 
\end{remark}
The process $\gamma_t:=\inf\{u: L_u>t\}$, where $\inf \emptyset = +\infty$, is a killed subordinator under $\Pr^a$
that is sent to $+\infty$ at its death time. An \textit{excursion} is a right continuous function $f:\R_+ \rightarrow E$.
such that if
\begin{equation*}
T_a(f)=\inf\{t>0: f(t)=a\},
\end{equation*}
then $f(t) = a$ for $t > T_a(f)$. Let $U$ denote the set of all excursions.
\begin{definition}
The point process of excursions from $a$ is
\begin{equation*}
\Pi := \{(t,e_t):\gamma_t\neq \gamma_{t-}\}
\end{equation*}
where $e_t\in U$,  the excursion at local time $t$, is
\[
e_t(s) =
\begin{cases} X_{\gamma_{t-}+s}, &\mbox{if } 0\leq s<\gamma_t-\gamma_{t-},\\
  a, & \mbox{otherwise. }
\end{cases}
\]
We can also think of $\Pi$ as a $\mathbb{Z}_+\cup \{\infty\}$-valued random measure. For any Borel set $A\subset \R_{++}\times U$
\begin{equation*}
N(A) := \#(A\cap \Pi).
\end{equation*}
Denote by $U_\infty:=\{f\in U: T_a(f)=\infty\}$ the infinite excursions and by $U_0:=U\setminus U_\infty$ the finite excursions.
\end{definition}

The main result of excursion theory says that there exists a $\sigma$-finite measure $n$ on $U$ such that $n(U_\infty)<\infty$, if $N'$ is a Poisson random measure on $\R_{++}\times U$ with expectation measure
 $\mathrm{Leb}\otimes n$, 
\begin{equation*}
\zeta := \inf\{t>0: N((0,t]\times U_\infty)>0\},
\end{equation*}
and
\begin{equation*}
\zeta' :=\inf\{t>0: N'((0,t]\times U_\infty)>0\},
\end{equation*}
then  the random measure $N = N(\cdot \cap (0,\zeta] \times U)$ under $\Pr^a$ and 
the random measure $N'(\cdot\cap (0,\zeta']\times U)$ have the same distribution.

\subsection{Construction of the bang-bang process}\label{s:bangbang_2}

In this section we construct a general version of the bang-bang process and, as a result, prove Theorem \ref{t:main_2}.
Assume throughout that the process $X$ satisfies the conditions of Theorem \ref{t:main_2}.

Let the process $X^h$ be the $h$-transform of $X$ using
\begin{equation}\label{e:h_density}
h(x) := \Pr^x\{T_a<\infty\} = \frac{r(x,a)}{ r(a,a)},
\end{equation}
where $r$ is the density for the 0-resolvent of $X$ (i.e. $r(x,y) = \int_0^\infty p_t(x,y) \, dt$).
By Theorem \ref{t:conditioning} $X^h$ is a Borel right process. 

%The process $X^h$ is guaranteed to hit $a$ but doesn't die immediately -- it is killed according to the amount of local time it spends at $a$. See for example the discussion on page 299 just after equation (62.24) of \cite{S88} or Section 3.9 from \cite{MR06}. 

We construct a new process from $X^h$ as follows.  We run $X^h$ until it dies, then we start another copy of $X^h$ from $a$, wait until it dies, and so on. Call this process $X^b$.  This is a special case of the construction of a resurrected process in \cite{Fitz91,meyer75}. By \cite{meyer75} we get that $X^b$ is a Borel right process.
Let
\begin{equation}\label{e_Resolvent^h}
R_\lambda^h g(x) = h(x)^{-1} R_\lambda (gh) (x)
\end{equation}
be the resolvent of the $h$-transform of $X$.
Note that $(R_\lambda^h)_{\lambda>0}$ satisfies the resolvent equation
\begin{equation}\label{e:R^h_property}
R_\lambda^h - R_\chi^h + (\lambda - \chi) R_\lambda^h R_\chi^h = 0, \quad  \lambda, \chi>0.
\end{equation}
The density $r_\lambda^h(x,a)$ of $R_\lambda^h$ may be treated informally as
\begin{equation}\label{e:resolvent_delta}
R_\lambda^h \delta_a(x),
\end{equation}
where $\delta_a$ is the ``Dirac delta function at $a$", and such manipulations can be made rigorous
using suitable approximations.

If $T$ is an independent exponential time with rate $\lambda$ and $\zeta$ is the time that $X^h$ dies, then
\begin{eqnarray*}
\bP^x[f(X_T^b)] &=& \bP^x[f(X_T^h), T < \zeta] + \bP^x[f(X_T^b), \zeta \le T]\\
&=&\bP^x[f(X_T^h), T < \zeta] + \bP^x[\exp(-\lambda \zeta)]  \bP^a[f(X_T^b)].
\end{eqnarray*}
Now,
\begin{equation*}
\bP^x\left[\int_0^\zeta \exp(-\lambda t) \, dt\right] = \frac{1}{\lambda} (1 - \bP^x[\exp(-\lambda \zeta)] )
\end{equation*}
and
\begin{equation*}
\bP^x\left[\int_0^\zeta \exp(-\lambda t) \, dt\right ] = R_\lambda^h 1(x),
\end{equation*}
so,
\begin{equation*}
R_\lambda^b f(x) = R_\lambda^h f(x) + (1 - \lambda R_\lambda^h 1(x)) R_\lambda^b f(a)
\end{equation*}
for all $x$. In particular, we can put in $x=a$ and solve to find that
\begin{equation*}
R_\lambda^b f(a) = R_\lambda^h f(a) / (\lambda R_\lambda^h 1(a))
\end{equation*}
and hence
\begin{equation}\label{e:resolve_second_construction}
R_\lambda^b f(x) = R_\lambda^h f(x) + (1 - \lambda R_\lambda^h 1(x)) R_\lambda^h f(a) / (\lambda R_\lambda^h 1(a)).
\end{equation}

Use $h(x) = \frac{R_0 \delta_a (x) } {r_0(a,a)}$ and the resolvent equation to get
\begin{eqnarray*}
\lambda R_\lambda h(a) &=& \frac{\lambda R_\lambda R_0 \delta_a (a)}{ r_0(a,a)} \\
&=& \frac{(R_0 - R_\lambda) \delta_a (a)}{ r_0(a,a)} \\
&=& \frac{r_0(a,a) - r_\lambda(a,a)} { r_0(a,a)} \\
&=& 1 - \frac{r_\lambda(a,a)}{r_0(a,a)}.
\end{eqnarray*}
This transforms \eqref{e:resolve_second_construction} into
\begin{equation}\label{e:resolve_second_construction2}
R_\lambda^b f(x) = R_\lambda^h f(x) + (1 - \lambda R_\lambda^h 1(x)) R_\lambda^h f(a) \frac{r_0(a,a)}{r_0(a,a)-r_\lambda(a,a)}.
\end{equation}

\begin{remark}
If $X$ is continuous, then $X^b$ is also continuous.
\end{remark}

\begin{remark}
Note that $X^b$ has resolvent densities with respect to the measure $m$. 
\end{remark}

\bigskip
\noindent
{\bf Proof of Theorem \ref{t:main_2}.}
From Theorem \ref{t:main_1} $(X_t^h)_{0\leq t<\zeta}$ under $\Pr^x$ is distributed as 
$(X_t)_{0\leq t<K_a}$ under $\Pr^x$ conditioned on $\{T_a<\infty\}$. 
The process $(X^b_t)_{t\geq 0}$ under $\Pr^x$ comes from pasting together $(X_t^h)_{0\leq t<\zeta}$ 
under $\Pr^x$ with independent identically distributed copies of $(X_t^h)_{0\leq t<\zeta}$ under 
$\Pr^a$. As a result, the process  $(X^b_t)_{t\geq 0}$ under $\Pr^x$ can be equivalently constructed
 by pasting together $(X_t)_{0\leq t<K_a}$ under $\Pr^x$ conditioned on $\{T_a<\infty\}$ 
with independent identically distributed copies of $(X_t)_{0\leq t<K_a}$ under $\Pr^a$.

Let $L$ be the local time of $X$ at $a$, $M$ the local time of $X^b$ at $a$ and $K$ be the time that the first copy of $(X_t)_{0\leq t<K_a}$ is killed. We see by the above that $M_K$ under $\Pr^x$ has the same distribution as $L_{K_a}$ under $\Pr^a$. The proof of Theorem \ref{t:main_2} is concluded by noting that $L_{K_a}$ is an exponential with rate $n(U_\infty)$. \qed

\section{Doob $h$-transforms for one-dimensional diffusions: characteristics}\label{s:doob}

We follow \cite{Bor02} and \cite{IM96} in defining a general one-dimensional diffusion and its characteristics.

Let $I=(\ell,r)$ with $-\infty\leq \ell<r\leq \infty$
and suppose that 
$((X_t), \Omega, \mathcal{F}, \Pr^x, (\theta_t),(\mathcal{F}_t))$ is a Borel right process, see Section \ref{s:general}, taking values in $I\cup \{\partial\}$. $X$ is called a \textit{linear} (or \textit{one-dimensional}) \textit{diffusion} if for all $x\in I$,
\[
\Pr^x\{\omega: t\mapsto X_t(\omega)~\text{is continuous on}~[0,\zeta)\}=1,
\]
where $\zeta$ is the lifetime of $X$.

We only consider \textit{regular diffusions}; that is, diffusions such that for all $x,y\in I$
\[
\Pr^x\{T_y<\infty\}>0,
\]
where $T_y:=\inf\{t:X_t=y\}$ -- any state $y$ can be reached in finite time with positive probability from any state $x$.

The diffusion $X$ determines  three basic Borel measures on the state space $I$:
a {\em scale measure} $s$, a {\em speed measure} $m$,  and a {\em killing measure} $k$ (see \cite{IM96}).
It turns out to be convenient not to specify these objects absolutely but only up to a constant.
If $(s^*, m^*, k^*)$ and $(s^{**}, m^{**}, k^{**})$ are two triples of these objects, then
$s^{**} = c s^*$ for some strictly positive constant $c$, in which case $m^{**} = c^{-1} m^*$
and $k^{**} = c^{-1} k^*$.  The scale measure $s$ is  diffuse.  Both the scale measure and the speed measure
have full support and assign
finite mass to intervals of the form $(y,z)$, where $\ell < y < z < r$.
If $(P_t)_{t \ge 0}$ is the transition semigroup of $X$, then there exists a density $p$ that is
strictly positive, jointly continuous in all variables, and symmetric such that
\[
P_t(x,A)=\int_A p(t;x,y) \, m(dy), \quad  \text{$x\in I$, $t>0$, and $A\in \mathcal{B}(I)$},
\]
where $\mathcal{B}(I)$ are the Borel subsets of $I$.
The killing measure $k$ assigns  finite mass to intervals of the form $(y,z)$, where $\ell < y < z < r$
and
\[
\Pr^x\{X_{\zeta-}\in A, \,  \zeta<t\} = \int_0^t \int_A  p(s;x,y)  \, k(dy) \, ds , \quad  A\in\mathcal{B}(I).
\]

We outline the recipes from \cite{IM96} for defining measures $s_{ab}, m_{ab}, k_{ab}$
on an interval $(a,b)$, $\ell < a < b < r$, such that if $s,m,k$ are the scale, speed and killing measures for $X$,
then there is a strictly positive constant $c_{ab}$ depending on $a,b$ such that $s(B) = c_{ab} s_{ab}(B)$,
$m(B) = c_{ab}^{-1} m_{ab}(B)$, and $k(B) = c_{ab}^{-1} k_{ab}(B)$ for $B \subseteq (a,b)$.
For $x\in (a,b)$,
define the \textit{hitting probabilities}
\[
p_{ab}(x) := \Pr^x\{T_a<T_b\},
\]
and
\[
p_{ba}(x) := \Pr^x\{T_b<T_a\},
\]
and the mean \textit{exit time}
\[
e_{ab}(x) := \Pr^x[T_a\wedge T_b \wedge \zeta].
\]
For ease of notation, we drop the subscripts for the
moment and write $s,m,k$ instead of $s_{ab}, m_{ab}, k_{ab}$.
Then
\be\label{e:scale_diffusion}
s(dx) :=p_{ab}(x)p_{ba}(dx)-p_{ba}(x)p_{ab}(dx)
\ee
\be\label{e:killing_diffusion}
k(dx) :=\frac{D_s^+p_{ab}(dx)}{p_{ab}(x)} = \frac{D_s^+p_{ba}(dx)}{p_{ba}(x)}
\ee
\be\label{e:speed_diffusion}
m(dx) := -[D_s^+e_{ab}(dx)-e_{ab}(x)k_{ab}(dx)]
\ee
for  $x\in (a,b)$, where
\[
D^+_{s}f(x) = \lim_{\eta\downarrow x}\frac{f(\eta)-f(x)}{s(\eta)-s(x)},
\]
and
\[
D^-_{s}f(x) = \lim_{\eta\uparrow x}\frac{f(\eta)-f(x)}{s(\eta)-s(x)}
\]
for a function $f:(a,b) \to \bR$ and, with a standard abuse of notation, as well as using $s$
to denote the scale measure we write $s$ for any {\em scale function} such that
\[
s(z) - s(y) =\int_y^z \, s(dx).
\]
For $\alpha>0$ the \textit{Green function} $r_\alpha(x,y)$ is given by
\begin{equation*}
r_\alpha(x,y):= \int_0^\infty e^{-\alpha t} p(t;x,y)\,dt,
 \end{equation*}
where, as above, $p(t;x,y)$ is the transition density with respect to the speed measure $m$.
Put
\[
r_0(x,y):=\lim_{\alpha\downarrow 0} r_\alpha(x,y).
\]
The diffusion $X$ is said to be \textit{recurrent} if
\[
\Pr^x\{T_y<\infty\}=1
\]
for all $x,y\in I$. A diffusion that is not recurrent is said to be \textit{transient}.
The diffusion $X$ is transient if and only if for all $x,y\in I$
\[
r_0(x,y)<\infty.
\]

\begin{remark}
If the killing measure is null ($k\equiv 0$), then
\[
r_0(x,y)
= \int_0^\infty p(t;x,y)\,dt
=\lim_{a\downarrow \ell, b\uparrow r}\frac{(s(x)-s(a))(s(b)-s(y))}{s(b)-s(a)}, \quad  x\leq y.
\]
\end{remark}

For a regular diffusion $X$ there exists (see \cite{IM96})
%pages 174 and 183
a family of random variables $\{L(t,x): x\in I, \, t\geq 0\}$ (sometimes also denoted by $L_t^x$) called the \textit{local time} of $X$, such that
\begin{itemize}
\item [I.]
\[
\int_0^t \ind_A (X_s)\,ds = \int_A L(t,x) \, m(dx), \; \Pr^x-\text{a.s.}, \; A\in \mathcal{B}(I),
\]
\item [II.]
\be\label{e:local_time_diffusion}
L(t,x)=\lim_{\epsilon \downarrow 0} \frac{\int_0^t\ind_{(x-\epsilon,x+\epsilon)}(X_s)\,ds}{m((x-\epsilon,x+\epsilon))},  \; \Pr^x-\text{a.s.}~
\ee
\item [III.] For any $s<t$,
\[
L(t,x,\omega)= L(s,x,\omega)+ L(t-s,x,\theta_s(\omega)), \;\Pr^x-\text{a.s.}
\]
\end{itemize}
One has
\[
\bP^x\left[\int_0^\infty e^{-\alpha t} \, dL(t,y)\right] = r_\alpha(x,y).
\]

For a fixed $x$ the process $L^x:=(L(t,x))_{t\geq 0}$ is called the \textit{local time process} of $X$ at the point $x$.

Suppose that $X$ is a regular, transient diffusion with null killing measure and $h:I\to \R_+$ is a strictly positive excessive function.
Since two strictly positive excessive functions that are multiples of each other lead to the same Doob $h$-transform, we may assume
for some $x_0\in I$ that $h(x_0)=1$.
For $\lambda>0$ and for some fixed reference point $a\in I$ define the functions $\psi_\lambda$ and $\phi_\lambda$  by
\begin{equation}\label{e:psi_+}
\psi_\lambda(x) =
  \begin{cases}
       \bP^x[\exp(-\lambda T_a)],    & \quad x\leq a, x\in\text{int}(I), \\
       1/\bP^a[\exp(-\lambda T_x)],  & \quad x\geq a, x\in\text{int}(I), \\
  \end{cases}
\end{equation}
and
\begin{equation}\label{e:psi_-}
\phi_\lambda(x) =
  \begin{cases}
      \bP^x[\exp(-\lambda T_a)],    & \quad x\geq a, x\in\text{int}(I), \\
      1/\bP^a[\exp(-\lambda T_x)],  & \quad x\leq a, x\in\text{int}(I). \\
  \end{cases}
\end{equation}
Note that
\[
\lim_{\lambda\downarrow 0} \bP^x\left[e^{-\lambda T_a}\right] = \Pr^x\{T_a<\infty\}.
\]
As a result the functions $\psi_0:=\lim_{\lambda \downarrow 0}\psi_\lambda$ and $\phi_0:=\lim_{\lambda \downarrow 0}\phi_\lambda$ satisfy
\begin{equation}\label{e:psi_0_+}
\psi_0(x) =
  \begin{cases}
         \Pr^x\{T_a<\infty\},   & \quad  x\leq a, x\in\text{int}(I), \\
         1/\Pr^a\{T_x<\infty\}, & \quad  x\geq a, x\in\text{int}(I), \\
  \end{cases}
\end{equation}
and
\begin{equation}\label{e:psi_0_-}
\phi_0(x) =
  \begin{cases}
         \Pr^x\{T_a<\infty\},    & \quad  x\geq a, x\in\text{int}(I), \\
         1/\Pr^a\{T_x<\infty\}, & \quad  x\leq a, x\in\text{int}(I). \\
  \end{cases}
\end{equation}
There is also the following relationship between the Green function $r_\alpha(x,y)$ and the functions $\psi_\lambda, \phi_\lambda$
\be
r_\lambda(x,y) =
\begin{cases}
w_\lambda^{-1}\psi_\lambda(x)\phi_\lambda(y),& x\leq y, \\
w_\lambda^{-1}\psi_\lambda(y)\phi_\lambda(x),& x\geq y,
\end{cases}
\ee
where the \textit{Wronskian}
\[
w_\lambda:=D_s^+\psi_\lambda(x)\phi_\lambda(x)-\psi_\lambda(x)D_s^+\phi_\lambda(x)
\]
is independent of $x$.

By \cite[II.5.30]{Bor02},  there is
a probability measure $\nu$ called the \textit{representing measure} of $h$ such that
\begin{equation}\label{e_aexc}
h(x)
=
\int_{(\ell,r)} \frac{r_0(x,y)}{r_0(x_0,y)}\,\nu(dy) +
\frac{\phi_0(x)}{\phi_0(x_0)} \, \nu(\{\ell\}) +
\frac{\psi_0(x)}{\psi_0(x_0)} \, \nu(\{r\}).
\end{equation}

Note that
\[
\lim_{y \to \ell} \bP^x\{T_y < \infty\}
= \lim_{y \to \ell} \frac{r_0(x,y)}{r_0(y,y)}
= \lim_{y \to \ell} \frac{\psi_0(y)\phi_0(x)}{\psi_0(y)\phi_0(y)}
= \frac{\phi_0(x)}{\phi_0(\ell+)}.
\]
Similarly,
\[
\lim_{y \to r} \bP^x\{T_y < \infty\}
= \frac{\psi_0(x)}{\psi_0(r-)}.
\]
Thus,
\begin{equation}
\label{e_aexc_interpret}
\begin{split}
h(x)
& :=
\bP^x\biggl[\int_{(\ell,r)} L_\infty^y / r_0(x_0,y) \, \nu(dy) \\
& \quad  +
\ind\left\{\lim_{t \to \infty} X_t = \ell\right\} \frac{\phi_0(\ell+) }{ \phi_0(x_0)} \, \nu(\{\ell\}) \\
& \quad 
+ \ind\left\{\lim_{t \to \infty} X_t = r\right\} \frac{\psi_0(r-)}{\psi_0(x_0)} \, \nu(\{r\}) \biggr]. \\
\end{split}
\end{equation}

\begin{theorem}
\label{t:h_diffusion_chars}
Let $X$ be a regular, transient diffusion with null killing measure,
speed measure $m$ and scale function $s$.
Suppose that $h$ is a strictly positive excessive function such that $h(x_0) = 1$
and $h$ has representing measure $\nu$.  The Doob $h$-transform is
a regular diffusion with the following characteristics:
 \begin{itemize}
 \item Scale measure
  \begin{equation}\label{e:h_scale}
 s^h(dy) = h^{-2}(y) \, s(dy).
\end{equation}
\item Speed measure
 \begin{equation}\label{e:h_speed}
 m^h(dy) = h^2(y) \, m(dy).
 \end{equation}
 \item Killing measure
  \begin{equation}\label{e:h_killing}
 k^h(dy)= \frac{h(x_0)h(y)}{r_0(x_0,y)} \, \nu(dy).
\end{equation}
\end{itemize}
\end{theorem}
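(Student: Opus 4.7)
The plan is to identify the characteristics of the $h$-transform via its infinitesimal generator. By Theorem \ref{t:conditioning}, $P^h_t f = h^{-1}P_t(hf)$, so at the generator level $\mathcal{L}^h f = h^{-1}\mathcal{L}(hf)$. Since the original diffusion has null killing measure, $\mathcal{L} = \frac{d}{dm}\frac{d}{ds}$, and a Leibniz expansion yields
\[
\mathcal{L}(hf) = h\,\mathcal{L}f + f\,\mathcal{L}h + 2\,\frac{dh}{ds}\cdot\frac{df}{dm},
\]
since the two first-derivative cross-terms combine via the symmetric identity $\frac{dh}{ds}\frac{df}{dm} = \frac{df}{ds}\frac{dh}{dm} = h'f'/(s'm')$. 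Dividing by $h$ produces
\[
\mathcal{L}^h f = \mathcal{L}f + \frac{2}{h}\,\frac{dh}{ds}\cdot\frac{df}{dm} + \frac{\mathcal{L}h}{h}\,f.
\]

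Matching this to the canonical form $\frac{d}{dm^h}\frac{d}{ds^h} - \frac{dk^h}{dm^h}$ of a regular one-dimensional diffusion, a direct computation shows that the ansatz $s^h(dy) = h^{-2}(y)\,s(dy)$, $m^h(dy) = h^2(y)\,m(dy)$ makes $\frac{d}{dm^h}\frac{d}{ds^h}$ reproduce exactly the pair $\mathcal{L}f + \frac{2}{h}\,\frac{dh}{ds}\,\frac{df}{dm}$, so this ansatz forces \eqref{e:h_scale} and \eqref{e:h_speed}. The leftover zeroth-order piece then gives $-\frac{dk^h}{dm^h} = \frac{\mathcal{L}h}{h}$, i.e.\ $k^h(dy) = -h(y)\,\mathcal{L}h(y)\,m(dy)$. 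To rewrite this as \eqref{e:h_killing} I would substitute the representation \eqref{e_aexc} of $h$: the two boundary summands $\frac{\phi_0}{\phi_0(x_0)}\nu(\{\ell\})$ and $\frac{\psi_0}{\psi_0(x_0)}\nu(\{r\})$ are $0$-harmonic for the generator of a null-killing diffusion and so vanish under $\mathcal{L}$, while for the integral term I would use the Green-function identity $\mathcal{L}_x r_0(x,y)\,m(dx) = -\delta_y(dx)$, which rigorously means $\int \mathcal{L}\varphi(x)\,r_0(x,y)\,m(dx) = -\varphi(y)$ for $\varphi$ in a suitable core (a consequence of $R_0\mathcal{L} = -I$ together with the $m$-symmetry of $R_0$). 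Integrating this against $\nu(dy)/r_0(x_0,y)$ gives $\mathcal{L}h(x)\,m(dx) = -\nu(dx)/r_0(x_0,x)$ on the interior, and multiplying by $-h$ (using $h(x_0) = 1$) yields \eqref{e:h_killing}.

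The main obstacle is justifying the formal Leibniz expansion and coefficient matching when the characteristics $s$, $m$ and the function $h$ are not absolutely continuous with respect to Lebesgue. A clean rigorous alternative is to bypass the generator entirely and verify each characteristic directly from the recipes \eqref{e:scale_diffusion}--\eqref{e:speed_diffusion}. The key input is that Doob's optional stopping at $T_a\wedge T_b$ (which is $\bP^x$-a.s.\ finite for $x\in(a,b)$, since the original diffusion has no killing) gives $p^h_{ab}(x) = h(a)p_{ab}(x)/h(x)$ and $p^h_{ba}(x) = h(b)p_{ba}(x)/h(x)$. Substituting into \eqref{e:scale_diffusion} and simplifying with the Leibniz rule for measures yields $s^h_{ab}(dx) = h(a)h(b)h^{-2}(x)\,s_{ab}(dx)$, which matches \eqref{e:h_scale} after the constant $h(a)h(b)$ is absorbed into the $c_{ab}$-normalization used to glue the $s^h_{ab}$'s into a global scale measure. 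Analogous computations with \eqref{e:killing_diffusion} and the expected-exit-time analogue of \eqref{e:speed_diffusion} then confirm \eqref{e:h_killing} and \eqref{e:h_speed} without any distributional calculus.
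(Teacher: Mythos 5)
Your ``rigorous alternative'' is essentially the paper's own proof: the paper also verifies the three characteristics directly from the recipes \eqref{e:scale_diffusion}--\eqref{e:speed_diffusion}, starting from exactly the identities $p^h_{ab}(x)=h(a)p_{ab}(x)/h(x)$ and $p^h_{ba}(x)=h(b)p_{ba}(x)/h(x)$ and absorbing the resulting factor $h(a)h(b)$ into the $c_{ab}$-normalization. Your preliminary generator computation is a genuinely different (and illuminating) presentation of the same facts --- in particular your chain $k^h(dy)=-h(y)\,\mathcal{L}h(y)\,m(dy)$ together with $\mathcal{L}h(x)\,m(dx)=-\nu(dx)/r_0(x_0,x)$ is precisely the content of the paper's identity $D_s^+h(dx)=-r_0(x_0,x)^{-1}\nu(dx)$, obtained there from the local representation \eqref{e_aexc_killed} of $h$ on $(a,b)$ --- but, as you acknowledge, it is only formal when $s$, $m$, $h$ are not absolutely continuous, so it cannot replace the direct verification.

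The one place where ``analogous computations'' conceals real work is the speed measure. Optional stopping at $T_a\wedge T_b$ delivers the hitting probabilities, but it does not by itself give $e^h_{ab}(x)=\tilde\bP^x[T_a\wedge T_b\wedge\zeta]$, because the $h$-transformed process carries killing \emph{inside} $(a,b)$ and may die before reaching the boundary. The paper handles this by representing the lifetime via the Campbell-type measure built from $\bar M$, splitting $e^h_{ab}(x)$ into $\bar\bP^x[\xi\,\ind\{\xi\le T_a\wedge T_b\}]+\bar\bP^x[T_a\wedge T_b\,\ind\{T_a\wedge T_b<\xi\}]$, and evaluating both terms with the Green function $G_{ab}$; the resulting expression \eqref{e_ab^h} is what makes the recipe \eqref{e:speed_diffusion} produce $h^2\,m$ after substantial cancellation against $e^h_{ab}(x)k^h(dx)$. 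You should either supply this decomposition or an equivalent device (e.g.\ the Feynman--Kac representation of the expected lifetime of the killed $h$-process) before claiming \eqref{e:h_speed}; without it the proof of the speed-measure formula is incomplete, though the intended route is sound.
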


\begin{proof}
Define the random measure $\bar M$ on $\bR_+ \cup \{+\infty\}$ by
\[
\bar M(B) := \int_{(\ell,r)} \int_B dL_t^y / r_0(x_0,y) \, \nu(dy), \quad  B \subseteq \bR_+,
\]
and
\[
\begin{split}
\bar M(\{+\infty\})
& :=
\ind\left\{\lim_{t \to \infty} X_t = \ell\right\} \frac{\phi_0(\ell+)} {\phi_0(x_0)} \, \nu(\{\ell\}) \\
& \quad  +
\ind\left\{\lim_{t \to \infty} X_t = r\right\} \frac{\psi_0(r-) }{ \psi_0(x_0)} \, \nu(\{r\}). \\
\end{split}
\]

With a small change in the meaning of the notation used previously for a Campbell measure,
define the probability measure $\bar \bP^x$ on $\Omega \times (\bR_+ \cup \{+\infty\})$ by
\[
\bar \bP^x\{A \times B\}
=
\frac{1}{h(x)}\bP^x\left[\ind_A \bar M(B)\right].
\]
for $B \subseteq \bR_+ \cup \{+\infty\}$.

Writing $\tilde \bP^x$ for the distributions of the $h$-transformed process, we have for any
finite stopping time $R$ and nonnegative
$\cF_R$-measurable random variable $Z$ that
\[
\tilde \bP^x[Z \ind\{\zeta > R\}]
=
\bar \bP^x[Z \ind\{\xi > R\}]
=
\frac{1}{h(x)} \bP^x[Z h(X_R)].
\]
In particular, the distribution of $\zeta$ under $\tilde \bP^x$ is that of $\xi$ under $\bar \bP^x$.

Recall that $p_{ab}(x) := \Pr^x\{T_a<T_b\}$ and  $p_{ba}(x) := \Pr^x\{T_b<T_a\}$.
Put $p^h_{ab}(x) := \tilde \bP^x\{T_a<T_b\}$ and $p^h_{ba}(x) := \tilde \bP^x\{T_b<T_a\}$.
Setting $T := T_a \wedge T_b$, we have
\begin{equation}
\begin{split}
p^h_{ab}(x)
&=
\int_{\Omega \times (\bR_+ \cup \{+\infty\})} \ind\{T_a(\omega) < T_b(\omega), \, T_a(\omega) < u\} \, \bar \bP^x(d\omega, du) \\
&=
\int_{\Omega \times (\bR_+ \cup \{+\infty\})} \ind\{X_T(\omega) = a, \,  T(\omega) < u\} \, \bar \bP^x(d\omega, du) \\
&=
\frac{1}{h(x)}\Pr^x\left[\ind\{X_T = a\} h(X_T)\right] \\
&= h(a) \bP^x\{T_a<T_b\}/h(x). \\
\end{split}
\end{equation}
Thus,
\[
p^h_{ab}(x) = h(a) p_{ab}(x) / h(x)
\]
and, by a similar argument,
\[
p^h_{ba}(x) = h(b) p_{ba}(x) / h(x).
\]
Put
$e_{ab}^h(x) := \tilde \bP^x[T_a \wedge T_b \wedge \zeta]$.
Then
\begin{equation*}
\begin{split}
e_{ab}^h(x)
&= \bar \bP^x[\xi \, \ind\{\xi \le T_a \wedge T_b\}] + \bar \bP^x[T_a \wedge T_b \, \ind\{T_a \wedge T_b < \xi\}] \\
&=\frac{1}{h(x)}\bP^x\left[\int_0^{T_a \wedge T_b} t \, \bar M(dt)\right]
+ \frac{1}{h(x)}\bP^x[T_a \wedge T_b \, h(X_{T_a \wedge T_b})].\\
\end{split}
\end{equation*}
Now,
\[
\begin{split}
\bP^x\left[\int_0^{T_a \wedge T_b} t \, \bar M(dt)\right]
& =
\bP^x\left[\int_0^\infty M([t,+\infty] \cap [0, T_a \wedge T_b))] \, dt \right]\\
& =
\bP^x\left[\int_0^\infty \ind\{t < T_a \wedge T_b\}
   \, \bP^{X_t} [\bar M([0, T_a \wedge T_b))]] \, dt \right] \\
& =
\bP^x\left[\int_0^\infty \ind\{t < T_a \wedge T_b\}
\, \bP^{X_t}\left[\int L_{T_a \wedge T_b}^y / r_0(x_0,y) \, \nu(dy)\right] \, dt\right] \\
& =
\int_a^b G_{ab}(x,z) \int_a^b G_{ab}(z,y) / r_0(x_0,y) \, \nu(dy) \, m(dz),\\
\end{split}
\]
where
\begin{equation}
\label{e:def_G_ab}
G_{a,b}(x,y)=G_{a,b}(y,x) := \frac{(s(x)-s(a))(s(b)-s(y))}{s(b)-s(a)}, \quad  a < x\leq y < b.
\end{equation}
Also, by \cite[Equation~4.1]{MR0375477},
\[
\begin{split}
\bP^x[T_a \wedge T_b \, h(X_{T_a \wedge T_b})]
& =
\bP^x[T_a \, \ind\{T_a < T_b\} h(a)] \\
& \quad  +
\bP^x[T_b \, \ind\{T_b < T_a\} h(b)] \\
& =
\int_a^b G_{a,b}(x,y) \frac{s(b)-s(y)}{s(b)-s(a)} \, m(dy) \\
& \quad  +
\int_a^b G_{a,b}(x,y) \frac{s(y)-s(a)}{s(b)-s(a)} \, m(dy).\\
\end{split}
\]
Thus
\begin{equation}
\label{e_ab^h}
\begin{split}
e^h_{ab}(x)
&=
\frac{1}{h(x)}\int_a^b G_{ab}(x,z) \int_a^b \frac{ G_{ab}(z,y)}{ r_0(x_0,y)} \, \nu(dy) \, m(dz) \\
&\quad  + \frac{1}{h(x)} \biggl[h(a) \int_a^b G_{a,b}(x,y) \frac{s(b)-s(y)}{s(b)-s(a)} \, m(dy) \\
&\quad  + h(b) \int_a^b G_{a,b}(x,y) \frac{s(y)-s(a)}{s(b)-s(a)} \, m(dy) \biggr]. \\
\end{split}
\end{equation}

From \eqref{e:scale_diffusion}, $s_{ab}^h$ (which we write as $s^h$ for ease of notation), is given by
\begin{equation}
\begin{split}
s^h(dx) &= p^h_{ab}(x) p^h_{ba}(dx) - p^h_{ba}(x) p^h_{ab}(dx)\\
&= h(a) \frac{p_{ab}(x)}  {h(x)}  h(b)  \left(\frac{p_{ba}(dx)}{ h(x)}  - \frac{p_{ba}(x) h(dx)} { h^2(x)}\right) \\
&-h(b) \frac{p_{ba}(x)}{ h(x)}   h(a)\left( \frac{p_{ab}(dx)}{ h(x)}  - \frac{p_{ab}(x) h(dx)}{ h^2(x)}\right)\\
&= h(a) h(b) h^{-2}(x) \, s(dx).\\
\end{split}
\end{equation}
Note that this agrees with \eqref{e:h_scale} apart from the constant multiple $h(a) h(b)$.

We next turn to \eqref{e:killing_diffusion} to determine $k_{ab}^h$,
which write as $k^h$.
By the quotient rule,
\begin{equation*}
\begin{split}
D^+_{s^h} p^h_{ab}(x) &=h(a) \frac{h^2(x)} {h(a) h(b)}
\left[-\frac{- p_{ab}(x) D_s^+ h(x)}{ h^2(x)}   + \frac{D_s^+ p_{ab}(x)}{h(x)}\right]
\\
&= \frac{1} { h(b)}
\left[-p_{ab}(x) D_s^+ h(x)   + D_s^+p_{ab}(x)h(x)\right],
\end{split}
\end{equation*}
where we stress that the derivatives are with respect to the original scale measure $s = s_{ab}$
rather than $s^h = s_{ab}^h$.

We now have to determine the measure
\[
D^+_{s^h}p^h_{ab}(dx).
\]
Because the original process $X$ doesn't have any killing,
\[
p_{ab}(x) = \frac{s(x) - s(a)}{s(b) - s(a)}
\]
and $D_s^+ p_{ab}(x)$ is constant.
As a result,
\begin{equation*}
\begin{split}
D^+_{s^h} p^h_{ab}(dx) &=  \frac{1} { h(b)}
[-p_{ab}(dx) D_s^+h(x)  -p_{ab}(x) D_s^+ h(dx)  +D_s^+ p_{ab}(dx)h(x)\\
&\quad  +D_s^+ p_{ab}(x)h(dx)]\\
&= \frac{1} { h(b)}
\left[-p_{ab}(dx) D_{s} h(x)  -p_{ab}(x) D_{s}h(dx)  +D_s^+p_{ab}(x)h(dx)\right]\\
&= -p_{ab}(x) D_s^+ h(dx)  \frac{1} { h(b)}  + \frac{1} { h(b)}
\left[-p_{ab}(dx) D_s^+ h(x)    +D_s^+ p_{ab}(x)h(dx)\right]\\
&= -p_{ab}(x) D_s^+ h(dx)  \frac{1} { h(b)}  \\
&\quad + \frac{1} { h(b)}
[-D_s^+ p_{ab}(x) D_s^+ h(x)s(dx)    +D_s^+ p_{ab}(x)D_s^+ h(x)s(dx)]\\
&= -p_{ab}(x) D_s^+ h(dx)  \frac{1} { h(b)}.
\end{split}
\end{equation*}
Thus,
\[
k^h(dx) = \frac{D^+_{s^h}p^h_{ab}(dx)}{p^h_{ab}(x)} = - \frac{h(x)}{h(a)h(b)}D_{s}^+ h(dx).
\]

The function $h$ restricted to the interval $(a,b)$ is excessive for the process $X$ killed when it exits $(a,b)$.
The $\alpha=0$ Green function for the latter process is the function $G_{ab}$ defined in \eqref{e:def_G_ab}, and
$h$ restricted to $(a,b)$ has a representation analogous to \eqref{e_aexc} of the form
%By \eqref{e_killed_martingale}
%\[
%h(X_t)  =  h(x) + M_t - \int L_t^y \, \kappa(dy) = h(x) + M_t - \int L_t^y G(x_0,y)^{-1} \, \nu(dy).
%\]
%Therefore,
%\begin{equation}\label{e_T_a_1}
%\begin{split}
%\Pr^x[h(X_{T_a \wedge T_b})]&= h(x) - \int \Pr^x[L_{T_a \wedge T_b}^y]  G(x_0,y)^{-1} \, \nu(dy)\\
%&= h(x) - \int_a^b G_{ab}(x,y) G(x_0,y)^{-1} \, \nu(dy).\\
%\end{split}
%\end{equation}
%
%Also,
%\begin{equation}\label{e_T_a_2}
%\begin{split}
%\Pr^x[h(X_{T_a \wedge T_b})]&= h(a) \Pr^x\{T_a < T_b\} + h(b) \Pr^x\{T_b < T_a\}\\
%&= h(a)\frac{s(b)-s(x)}{s(b)-s(a)} + h(b)\frac{s(x)-s(a)}{s(b)-s(a)}.\\
%\end{split}
%\end{equation}
%Putting \eqref{e_T_a_1} and \eqref{e_T_a_2} together yields
\begin{equation}
\label{e_aexc_killed}
h(x) = h(a)\frac{s(b)-s(x)}{s(b)-s(a)} + h(b)\frac{s(x)-s(a)}{s(b)-s(a)}+ \int_a^b G_{ab}(x,y) G(x_0,y)^{-1} \, \nu(dy).
\end{equation}
Hence,
\[
\begin{split}
D_s^+ h(x)
& =  \frac{h(b) - h(a)}{s(b) - s(a)} \\
& \quad  -\int_{a < y \le x} r_0(x_0,y)^{-1}  \frac{s(y) - s(a)}{s(b) - s(a)} \nu(dy) \\
& \quad  +
\int_{x \le y < b} r_0(x_0,y)^{-1} \frac{s(b) - s(y)} {s(b) - s(a)} \nu(dy) \\
\end{split}
\]
and
\[
\begin{split}
D_s^+ h(dx)
& =
-r_0(x_0,x)^{-1}  \frac{s(x) - s(a)}{s(b) - s(a)} \nu(dx) \\
& \quad  -
r_0(x_0,x)^{-1} \frac{s(b) - s(x)}{s(b) - s(a)} \nu(dx) \\
& =
-r_0(x_0,x)^{-1}  \nu(dx). \\
\end{split}
\]

Thus,
\[
\begin{split}
k^h(dx)
& =
- \frac{D^+_{s^h}p^h_{ab}(dx)}{ p^h_{ab}(x)} \\
& =
-\frac{h(x) D_s^+h(dx) }{h(a) h(b)} \\
& =
\frac{1}{h(a) h(b)} h(x) r_0(x_0,x)^{-1} \,  \nu(dx) \\
\end{split}
\]
Note that this agrees with \eqref{e:h_killing} apart from the constant multiple $\frac{1}{h(a)h(b)}$.

Next, we turn to \eqref{e:speed_diffusion} to determine $m_{ab}^h$,
which we write as $m^h$.  Recall from \eqref{e_ab^h} that
$e^h_{ab}(x) = E_1(x) + E_2(x)$, $x \in (a,b)$, where
\[
E_1(x)
: =
\frac{1}{h(x)}\int_a^b G_{ab}(x,z) \int_a^b \frac{ G_{ab}(z,y)}{ r_0(x_0,y)} \, \nu(dy) \, m(dz)
\]
and
\begin{equation*}
\begin{split}
E_2(x)
:=
\frac{1}{h(x)}
&\Bigg[h(a) \int_a^b G_{a,b}(x,y) \frac{s(b)-s(y)}{s(b)-s(a)} \, m(dy)\\
&\quad +h(b) \int_a^b G_{a,b}(x,y) \frac{s(y)-s(a)}{s(b)-s(a)} \, m(dy) \Bigg].
\end{split}
\end{equation*}

We first need to compute
\[
D^+_{s^h}{e^h_{ab}}(x) := \lim_{\eta\downarrow x} \frac{e^h_{ab}(\eta)-e^h_{ab}(x)}{s^h(\eta)-s^h(x)}.
\]
If $a < x < y < b$, then
\[
D^+_{s^h}G_{a,b}(x,y) = \frac{h^2(x)}{h(a)h(b)}\frac{s(b)-s(y)}{s(b)-s(a)},
\]
while if $a < y < x < b$, then
\[
D^+_{s^h}G_{a,b}(x,y) = -\frac{h^2(x)}{h(a)h(b)}\frac{s(y)-s(a)}{s(b)-s(a)}.
\]
Thus,
\[
\begin{split}
D^+_{s^h}\left(\frac{G_{a,b}(x,y)}{h(x)}\right)
& =\frac{D^+_{s^h}G_{a,b}(x,y)}{h(x)} - \frac{G_{a,b}(x,y)D^+_{s^h}h(x)}{h^2(x)} \\
& = \frac{D^+_{s^h}G_{a,b}(x,y)}{h(x)} - \frac{G_{a,b}(x,y)D_s^+h(x)}{h(a)h(b)}. \\
\end{split}
\]

Now,
\begin{equation}
\begin{split}
D^+_{s^h}E_1(x)&=\int_a^b D^+_{s^h}\left(\frac{G_{ab}(x,z)}{h(x)}\right) \int_a^b \frac{ G_{ab}(z,y)}{ r_0(x_0,y)} \, \nu(dy) \, m(dz)\\
&=\int_a^b \biggl(\frac{D^+_{s^h}G_{a,b}(x,y)}{h(x)} \\
& \quad  - \frac{G_{a,b}(x,y)D_s^+h(x)}{h(a)h(b)}\biggr) \int_a^b \frac{ G_{ab}(z,y)}{ r_0(x_0,y)} \, \nu(dy) \, m(dz).\\
\end{split}
\end{equation}

Also,
\[
\begin{split}
D^+_{s^h}{E_2}(x) &= h(a) \int_a^b \biggl( \frac{D^+_{s^h}G_{a,b}(x,y)}{h(x)} \\
& \qquad - \frac{G_{a,b}(x,y)D_s^+h(x)}{h(a)h(b)}\biggr) \frac{s(b)-s(y)}{s(b)-s(a)} \, m(dy) \\
& \quad  +h(b) \int_a^b \biggl( \frac{D^+_{s^h}G_{a,b}(x,y)}{h(x)} \\
& \quad  \qquad - \frac{G_{a,b}(x,y)D_s^+h(x)}{h(a)h(b)}\biggr) \frac{s(y)-s(b)}{s(b)-s(a)} \, m(dy) \\
& = \int_a^x \biggl(\frac{-h(x)}{h(b)}\frac{s(y)-s(a)}{s(b)-s(a)} \\
& \qquad - \frac{ \frac{(s(y)-s(a))(s(b)-s(x))}{s(b)-s(a)}D_s^+h(x)}{h(b)} \biggr)    \frac{s(b)-s(y)}{s(b)-s(a)} \, m(dy) \\
& \quad  +  \int_x^b  \biggl( \frac{h(x)}{h(b)}\frac{s(b)-s(y)}{s(b)-s(a)} \\
& \quad  \qquad - \frac{ \frac{(s(x)-s(a))(s(b)-s(y))}{s(b)-s(a)}D_s^+h(x)}{h(b)}\biggr)           \frac{s(b)-s(y)}{s(b)-s(a)} \, m(dy) \\
& \quad  + \int_a^x\biggl(\frac{-h(x)}{h(a)}\frac{s(y)-s(a)}{s(b)-s(a)}  \\
& \quad  \qquad - \frac{ \frac{(s(y)-s(a))(s(b)-s(x))}{s(b)-s(a)}D_s^+h(x)}{h(a)} \biggr)  \frac{s(y)-s(a)}{s(b)-s(a)} \, m(dy) \\
& \quad  + \int_x^b \biggl( \frac{h(x)}{h(a)}\frac{s(b)-s(y)}{s(b)-s(a)} \\
& \quad  \qquad - \frac{ \frac{(s(x)-s(a))(s(b)-s(y))}{s(b)-s(a)}D_s^+h(x)}{h(a)}\biggr)   \frac{s(y)-s(a)}{s(b)-s(a)} \, m(dy).\\
\end{split}
\]

Next we need to identify the measure $D^+_{s^h}{e}^h(dx)$.  We have

\begin{align*}
D^+_{s^h}{E_2}(dx)
&= \int_a^x \Biggl(\frac{-D_s^+ h(x) s(dx)}{h(b)}\frac{s(y)-s(a)}{s(b)-s(a)} \\
& \qquad - \frac{ \frac{-s(dx)(s(y)-s(a))}{s(b)-s(a)}D_s^+h(x)}{h(b)}
- \frac{ \frac{(s(y)-s(a))(s(b)-s(x))}{s(b)-s(a)}D_s^+h(dx)}{h(b)}\Biggr) \\
& \qquad \times   \frac{s(b)-s(y)}{s(b)-s(a)}m(dy) \\
& \quad  +  \int_x^b  \Biggl( \frac{D_s^+h(x)s(dx)}{h(b)}\frac{s(b)-s(y)}{s(b)-s(a)}
- \frac{ \frac{(s(x)-s(a))(s(dx))}{s(b)-s(a)}D_s^+h(x)}{h(b)} \\
& \quad  \qquad - \frac{ \frac{(s(x)-s(a))(s(b)-s(y))}{s(b)-s(a)}D_s^+h(dx)}{h(b)}\Biggr)\\
& \quad  \qquad \times  \frac{s(b)-s(y)}{s(b)-s(a)} \, m(dy)\\
& \quad  + \int_a^x\Biggl(\frac{-D_s^+h(x)s(dx)}{h(a)}\frac{s(y)-s(a)}{s(b)-s(a)} \\
& \quad  \qquad - \frac{ \frac{-s(dx)(s(y)-s(a))}{s(b)-s(a)}D_s^+h(x)}{h(a)}
- \frac{ \frac{(s(y)-s(a))(s(b)-s(x))}{s(b)-s(a)}D_s^+h(dx)}{h(a)} \Biggr)  \\
& \quad  \qquad \times \frac{s(y)-s(a)}{s(b)-s(a)} \, m(dy) \\
& \quad  + \int_x^b \Biggl( \frac{D_s^+h(x)s(dx)}{h(a)}\frac{s(b)-s(y)}{s(b)-s(a)} \\
& \quad  \qquad - \frac{ \frac{(s(b)-s(y))(s(dx))}{s(b)-s(a)}D_s^+h(x)}{h(a)}
-\frac{ \frac{(s(x)-s(a))(s(b)-s(y))}{s(b)-s(a)}D_s^+h(dx)}{h(a)}\Biggr) \\
& \quad  \qquad \times  \frac{s(y)-s(a)}{s(b)-s(a)} \, m(dy)\\
& \quad  +\Biggl( \frac{-h(x)}{h(b)}\frac{s(x)-s(a)}{s(b)-s(a)}
- \frac{ \frac{(s(x)-s(a))(s(b)-s(x))}{s(b)-s(a)}D_s^+h(x)}{h(b)}\Biggr) \frac{s(b)-s(x)}{s(b)-s(a)} \, m(dx)\\
& \quad  -  \Biggl( \frac{h(x)}{h(b)}\frac{s(b)-s(x)}{s(b)-s(a)}
- \frac{ \frac{(s(x)-s(a))(s(b)-s(x))}{s(b)-s(a)}D_s^+h(x)}{h(b)}\Biggr)           \frac{s(b)-s(x)}{s(b)-s(a)} \, m(dx)\\
& \quad  + \Biggl(\frac{-h(x)}{h(a)}\frac{s(x)-s(a)}{s(b)-s(a)}
- \frac{ \frac{(s(x)-s(a))(s(b)-s(x))}{s(b)-s(a)}D_s^+h(x)}{h(a)} \Biggr)  \frac{s(x)-s(a)}{s(b)-s(a)} \, m(dx)\\
& \quad  -\Biggl( \frac{h(x)}{h(a)}\frac{s(b)-s(x)}{s(b)-s(a)}
- \frac{ \frac{(s(x)-s(a))(s(b)-s(x))}{s(b)-s(a)}D_s^+h(x)}{h(a)}\Biggr)   \frac{s(x)-s(a)}{s(b)-s(a)} \, m(dx).\\
\end{align*}

Doing the necessary cancellations results in

\begin{align*}
D^+_{s^h}{E_2}(dx)
&= \int_a^x \Bigg(- \frac{ \frac{(s(y)-s(a))(s(b)-s(x))}{s(b)-s(a)}D_s^+h(dx)}{h(b)}\Bigg) \frac{s(b)-s(y)}{s(b)-s(a)}m(dy) \\
& \quad  +  \int_x^b  \left( - \frac{ \frac{(s(x)-s(a))(s(b)-s(y))}{s(b)-s(a)}D_s^+h(dx)}{h(b)}\right)\frac{s(b)-s(y)}{s(b)-s(a)}m(dy)\\
& \quad  + \int_a^x\left(- \frac{ \frac{(s(y)-s(a))(s(b)-s(x))}{s(b)-s(a)}D_s^+h(dx)}{h(a)} \right) \frac{s(y)-s(a)}{s(b)-s(a)}m(dy) \\
& \quad  + \int_x^b \left(-\frac{ \frac{(s(x)-s(a))(s(b)-s(y))}{s(b)-s(a)}D_s^+h(dx)}{h(a)}\right)  \frac{s(y)-s(a)}{s(b)-s(a)}m(dy)\\
& \quad  +\left( \frac{-h(x)}{h(b)}\frac{s(x)-s(a)}{s(b)-s(a)} \right) \frac{s(b)-s(x)}{s(b)-s(a)}m(dx)\\
& \quad  -  \left( \frac{h(x)}{h(b)}\frac{s(b)-s(x)}{s(b)-s(a)}\right)           \frac{s(b)-s(x)}{s(b)-s(a)}m(dx)\\
& \quad  + \left(-\frac{h(x)}{h(a)}\frac{s(x)-s(a)}{s(b)-s(a)} \right)  \frac{s(x)-s(a)}{s(b)-s(a)}m(dx)\\
& \quad  -\left( \frac{h(x)}{h(a)}\frac{s(b)-s(x)}{s(b)-s(a)} \right)   \frac{s(x)-s(a)}{s(b)-s(a)}m(dx)\\
& = -D_s^+ h(dx)h(x) E_2(x)\frac{1}{h(a)h(b)}-\biggl(\frac{h(x)}{h(b)}\frac{s(b)-s(x)}{s(b)-s(a)} \\
& \qquad + \frac{h(x)}{h(a)}\frac{s(x)-s(a)}{s(b)-s(a)}\biggr) \, m(dx)\\
&= -D_s^+ h(dx)h(x) E_2(x)\frac{1}{h(a)h(b)} \\
& \quad  - \left(\frac{h(x)}{h(b)}\frac{s(b)-s(x)}{s(b)-s(a)} + \frac{h(x)}{h(a)}\frac{s(x)-s(a)}{s(b)-s(a)}\right) \, m(dx).\\
\end{align*}

Similar computations for $E_1$ give
\[
\begin{split}
D^+_{s^h}{E_1}(dx)
&= \left(-\frac{h(x)}{h(a)h(b)}\frac{s(x)-s(a)}{s(b)-s(a)}\right) \int_a^b\frac{ G_{ab}(x,y)}{ r_0(x_0,y)} \, \nu(dy)\,m(dx) \\
& \quad  -\left(\frac{h(x)}{h(a)h(b)}\frac{s(b)-s(x)}{s(b)-s(a)}\right) \int_a^b\frac{ G_{ab}(x,y)}{ r_0(x_0,y)} \, \nu(dy)\,m(dx) \\
& \quad  -\frac{1}{h(a)h(b)}h(x) D_s^+h(dx) E_1(x)\\
&= -\frac{h(x)}{h(a)h(b)}\int_a^b\frac{ G_{ab}(x,y)}{ r_0(x_0,y)} \, \nu(dy)\,m(dx) -\frac{1}{h(a)h(b)}h(x) D_s^+h(dx) I_1(x).\\
\end{split}
\]

Thus,
\[
\begin{split}
D^+_{s^h}e^h(dx)
&= -D_s^+ h(dx)h(x) E_2(x)\frac{1}{h(a)h(b)} \\
& \quad  - \left(\frac{h(x)}{h(b)}\frac{s(b)-s(x)}{s(b)-s(a)} + \frac{h(x)}{h(a)}\frac{s(x)-s(a)}{s(b)-s(a)}\right) \, m(dx)\\
& \quad  -\frac{h(x)}{h(a)h(b)}\int_a^b\frac{ G_{ab}(x,y)}{ r_0(x_0,y)} \, \nu(dy)\,m(dx) -\frac{1}{h(a)h(b)}h(x) D_s^+h(dx) E_1(x)\\
&=  -D_s^+ h(dx)h(x) e^h(x) \frac{1}{h(a)h(b)} \\
& \quad  -\frac{h(x)}{h(a)h(b)}\biggl(h(a)\frac{s(b)-s(x)}{s(b)-s(a)} + h(b)\frac{s(x)-s(a)}{s(b)-s(a)} \\
& \qquad +\int_a^b\frac{ G_{ab}(x,y)}{ r_0(x_0,y)} \, \nu(dy)\biggr) \, m(dx)\\
&= e^h(x)k^h(dx) -\frac{h(x)}{h(a)h(b)}\biggl(h(a)\frac{s(b)-s(x)}{s(b)-s(a)} + h(b)\frac{s(x)-s(a)}{s(b)-s(a)} \\
& \quad  +\int_a^b\frac{ G_{ab}(x,y)}{ r_0(x_0,y)} \, \nu(dy)\biggr) \,m(dx).\\
\end{split}
\]

Substituting the above computations into \eqref{e:speed_diffusion} produces
\begin{equation}\label{e_m^h}
\begin{split}
m^h(dx) &=- [D^+_{s^h}e^h(dx) - e^h(x)k^h_{a,b}(dx)]\\
&=\frac{h(x)}{h(a)h(b)} \biggl(\int_a^b\frac{ G_{ab}(x,y)}{ r_0(x_0,y)} \, \nu(dy) \\
& \quad  +h(a)\frac{s(b)-s(x)}{s(b)-s(a)} + h(b)\frac{s(x)-s(a)}{s(b)-s(a)}\biggr) \, m(dx).\\
\end{split}
\end{equation}
Combining, \eqref{e_m^h} and \eqref {e_aexc_killed} gives
\[
m^h(dx) = \frac{1}{h(a) h(b)} h^2(x) \, m(dx).
\]
Note that this agrees with \eqref{e:h_speed} apart from the constant multiple $\frac{1}{h(a)h(b)}$.

Lastly, note that for a nonnegative function $f:I \to \bR$, we have
\[
\begin{split}
\int_I r_0^h(x,y) f(y) \, m^h(dy)
& =
\int_0^\infty \int_I f(y) P_t^h(x,dy)  \, dt \\
& =
\int_0^\infty \int_I f(y) \frac{1}{h(x)} h(y) P_t(x,dy)  \, dt \\
& =
\int_I f(y) \frac{1}{h(x)} h(y) r_0(x,y) \, m(dy) \\
& =
\int_I f(y) \frac{1}{h(x)} h(y) r_0(x,y) h(y)^{-2} \, m^h(dy) \\
& =
\int_I f(y) \frac{1}{h(x) h(y)} r_0(x,y) \, m(dy), \\
\end{split}
\]
and so
\[
r_0^h(x,y) = \frac{1}{h(x) h(y)} r_0(x,y),
\]
as required.

This completes the proof of Theorem~\ref{t:h_diffusion_chars}.
\end{proof}

\begin{remark}
The characteristics $s^h, k^h, m^h$ of the $h$-transformed process seem to be known in some degree of generality in the folklore. 
We presented a proof because we were not able to find a sufficiently general result in the literature. 
We assumed that the original, unconditioned process $X$ does not have killing, $k\equiv 0$, because this is the case
that is of interest to us and including killing would complicate the computations. 
See \cite{LS90} for results along the lines of ours under certain assumptions.
\end{remark}

\section{Doob $h$-transforms for one-dimensional diffusions: generators}\label{s:doob_generator}

The diffusion $X$ determines and in turn is determined by its infinitesimal generator.
The infinitesimal generator is specified by the scale, speed and killing measures
and by {\em boundary conditions} on functions in the domain.  For the sake of completeness, following \cite{Bor02},
we now sketch this correspondence.

Fix $z\in I$.
The left-hand point $\ell$ is called \textit{exit} for $X$ if
\[
\int_{(\ell,z)}[m((x,z))+k((x,z))] \, s(dx)<\infty
\]
and \textit{entrance} if
\[
\int_{(\ell,z)}(s(z)-s(x)) \, (m(dx)+k(dx))<\infty
\]
with similar definitions for the right-hand point $r$.
A boundary point that is both entrance and exit is called \textit{non-singular} or \textit{regular}.
%A diffusion reaches its regular boundaries in finite time with positive probability. Therefore, one can in principle start a diffusion from a regular boundary. In this case the characteristics $s, m$ (together with possible killing $k$) do not determine the process uniquely and one needs to give boundary conditions at each regular boundary point.

A boundary point which is neither entrance nor exit is called a \textit{natural} boundary.
%Natural boundary points cannot be reached from the interior of $I$ and one cannot start the process from such points.
If $\ell$ is natural, then it is said to be \textit{attractive} if $\lim_{x\downarrow\ell} s(x)>-\infty$.
In this case we have $\lim_{t\rightarrow \infty}X_t=\ell$ with positive probability.

\begin{definition}\label{d:generator}
The \textit{(weak) infinitesimal generator} of $X$ is the operator $\cG^{\bullet}$ defined by
\[
\cG^{\bullet}f:=\lim_{t\downarrow 0}\frac{P_t f-f}{t}
\]
applied to $f\in \cC_b(I)$ for which the limit exists pointwise, is in $\cC_b(I)$, and
\[
\sup_{t>0}\left\|\frac{P_t f-f}{t}\right\|<\infty.
\]
Denote by $\cD(\cG^{\bullet})$ the set of such functions. Define a set of functions $\cD(\cG)$ by saying that $f\in \cC_b(I)$
belongs to $\cD(\cG)$ if $D_{s}^-f$ and $D_{s}^+f$ exist
and there exists a function $g\in \cC_b(I)$ such that for all $\ell<a<b<r$,
\begin{itemize}
  \item [(a)]
  \[
  \int_{[a,b)}g(x) \, m(dx) = D_{s}^- f(b) - D_{s}^- f(a) - \int_{[a,b)}f(x) \, k(dx).
  \]
   \item [(b)]
  \[
  \int_{(a,b]}g(x) \, m(dx) = D_{s}^+ f(b) - D_{s}^+ f(a) - \int_{(a,b]}f(x) \, k(dx).
  \]
  \item [(c)] If $\ell$ is regular and $m(\{\ell\}), k(\{\ell\})<\infty$
  \[
  g(\ell)m(\{\ell\}) = D_{s}^+f(\ell+) - f(\ell)k(\{\ell\}).
  \]
  \item [(d)] If $r$ is regular and $m(\{r\}), k(\{r\})<\infty$
  \[
  g(r)m^h(\{r\}) = -D_{s}^-f(r-) - f(r)k(\{r\}).
  \]
  \item [(e)] If $\ell$ is entrance-not-exit
  \[
  D_{s}^+f(\ell+)=0.
  \]
  \item [(f)] If $\ell$ is exit-not-entrance
  \[
  f(\ell+)=0.
  \]
  \item [(g)] If $r$ is entrance-not-exit
  \[
  D_{s}^-f(r-)=0.
  \]
  \item [(h)] If $r$ is exit-not-entrance
  \[
  f(r-)=0.
  \]
  \item [(i)] If $m(\{\ell\})=\infty$ or $k(\{\ell\})=\infty$, then
  \[
  g(\ell)=-\gamma(\ell)f(\ell), \;\gamma(\ell)>0.
  \]
\end{itemize}
Define
\[
\cG f:= g
\]
for $f\in \cD(\cG)$. Note that (a) and (b) imply that when $f\in \cD(\cG)$ then
$D_{s}^-f$ is left continuous and $D_{s}^+f$ right continuous.
From \cite{IM96}
%pages 100, 117 and 135
one has
\[
\cG=\cG^{\bullet},\quad \cD(\cG) = \cD(\cG^{\bullet}).
\]
\end{definition}

Let $\cG$ be the generator of a diffusion $X$ on $I:=(\ell,r)$ where $\ell$ and $r$ are inaccessible.
Suppose $u$ is a continuous solutions to the ODE
\begin{equation}\label{e:G_psi}
\cG u =\alpha u
\end{equation}
that is,
\be\label{e:psi_phi}
\alpha\int_{[a,b)}u(x) \, m(dx)=D_s^-u(b)-D_s^-u(a)-\int_{[a,b)}u(x) \,k(dx)
\ee
for all $(a,b)\subset I$. For $\alpha> 0$ the functions $\psi_\alpha$ and $\phi_\alpha$ from \eqref{e:psi_+} and \eqref{e:psi_-} can be characterized as the unique (up to a multiplicative constant) solutions of \eqref{e:psi_phi} by firstly demanding that $\psi_\alpha$ is increasing and $\phi_\alpha$ decreasing, and then imposing the boundary conditions
\begin{eqnarray*}
\psi_\alpha(\ell +) = \phi_\alpha(r-)=0,
\end{eqnarray*}
and
\[
\psi_\alpha (r-) =\phi_\alpha(\ell+)=+\infty.
\]

\begin{remark}
Consider the special case where the diffusion $X$ has null killing measure and scale and speed measures that
are absolutely continuous with respect to Lebesgue measure
\begin{itemize}
  \item  $m(dx)=m'(x) \, dx$.
  \item  $s(dx)=s'(x) \, dx$.
  \item  $k\equiv 0$.
\end{itemize}
If $s'\in C^1(I)$ then the infinitesimal generator $\cG:\cD(\cG)\to\cC_b(I)$ of $X$ is a second order differential operator
\[
\cG f(x) = \frac{1}{2}\sigma^2(x)\partial_{xx}f(x) + b(x)\partial_x f(x)
\]
where
\be\label{e:scale_speed_C^1}
m'(x) = 2\sigma^{-2}(x)e^{B(x)}, s'(x)=e^{-B(x)}
\ee
with $B(x):=\int^x2\sigma^{-2}(y)b(y)\,dy$. The domain $\cD(\cG)$ consists of all functions in $\cC_b(I)$ such that $\cG f\in\cC_b(I)$ together with the appropriate boundary conditions.
\end{remark}

\begin{remark}
If $m$ is absolutely continuous with respect to Lebesgue measure, $m(dx)=m'(x)dx$ then
\begin{equation}\label{e:density_speed}
p(t;x,y) = q(t;x,y)/m'(y)
\end{equation}
where $q(t;x,y)$ is the transition density with respect to Lebesgue measure.
\end{remark}

%If we are given $h, x_0, \nu$ we can do the $h$-transform of our diffusion by defining the probability measure
%\begin{equation*}
%\Qr^x_h(A):= \Pr^x\left[\frac{h(\omega(t))}{h(x)}; A\right], A\in\mathcal{F}_t
%\end{equation*}
 %and semigroup
 %\begin{equation*}
 %P_t^h f(x) := \Qr^x_h [f] = \int_I p(t;x,y)f(y) \frac{h(y)}{h(x)}\,m(dy).
 %\end{equation*}
 %for $f\in C_b(I)$. Then,  under the probability measures $(\Qr_h^x)_{x\in I}$ the process

We follow \cite{Bor02} and \cite{IM96} in order to characterize the generator of the $h$-transformed diffusion.

We showed that if we have a transient diffusion $X$ on $I=(\ell,r)$ with natural boundary points $\ell$ and $r$, that is characterized by a scale measure $s(dx)$ and a speed measure $m(dx)$ and no killing, then, if $h$ is excessive with representation
\be\label{e:h_representation}
h(x)= \int_{(\ell,r)} \frac{r_0(x,y)}{r_0(x_0,y)} \nu(dy) + \frac{\phi_0(x)}{\phi_0(x_0)} \nu(\{\ell\}) +\frac{\psi_0(x)}{\psi_0(x_0)} \nu(\{r\})
\ee
the $h$-transform $X^h$ is a diffusion on $I$ that is characterized by
\begin{itemize}
\item Speed measure
 \begin{equation*}
 m^h(dy) = h^2(y) m(dy).
 \end{equation*}
 \item Scale function
  \begin{equation*}
 s^h(dy) = h^{-2}(y) s(dy).
 \end{equation*}
 \item Killing measure
  \begin{equation*}
 k^h(dy)= (G^h(x_0,y))^{-1}\nu(dy), y\in I, G^h:=\frac{r_0(x,y)}{h(x)h(y)}.
 \end{equation*}
 \end{itemize}
We can now write down Definition \ref{d:generator} for the process $X^h$.
The (weak) infinitesimal generator of $X^h$ is the operator $\cG^{h,\bullet}$ defined by
\[
\cG^{h,\bullet}:=\lim_{t\downarrow 0}\frac{P^h_t f-f}{t}
\]
applied to $f\in \cC_b(I)$ for which the limit exists pointwise, is in $\cC_b(I)$, and
\[
\sup_{t>0}\left\|\frac{P^h_t f-f}{t}\right\|<\infty.
\]
Denote by $\cD(\cG^{h,\bullet})$ the set of such functions. Define a set of functions $\cD(\cG^h)$ by saying that $f\in \cC_b(I)$ belongs to $\cD(\cG^h)$ if $D_{s^h}^-f$ and $D_{s^h}^+f$ exist and there exists a function $g\in \cC_b(I)$ such that for all $\ell<a<b<r$,
\begin{itemize}
  \item [(a)]
  \[
  \int_{[a,b)}g(x)m^h(dx) = D_{s^h}^- f(b) - D_{s^h}^- f(a) - \int_{[a,b)}f(x)k^h(dx).
  \]

  \item [(b)]
  \[
  \int_{(a,b]}g(x)m^h(dx) = D_{s^h}^+ f(b) - D_{s^h}^+ f(a) - \int_{(a,b]}f(x)k^h(dx).
  \]
  \item [(c)] If $\ell$ is regular and $m^h(\{\ell\}), k^h(\{\ell\})<\infty$
  \[
  g(\ell)m^h(\{\ell\}) = D_{s^h}^+f(\ell+) - f(\ell)k^h(\{\ell\}).
  \]
  \item [(d)] If $r$ is regular and $m^h(\{r\}), k^h(\{r\}<\infty$
  \[
  g(r)m^h(\{r\}) = -D_{s^h}^-f(r-) - f(r)k^h(\{r\}).
  \]
  \item [(e)] If $\ell$ is entrance-not-exit
  \[
  D_{s^h}^+f(\ell+)=0.
  \]
  \item [(f)] If $\ell$ is exit-not-entrance
  \[
  f(\ell+)=0.
  \]
  \item [(g)] If $r$ is entrance-not-exit
  \[
  D_{s^h}^-(r-)=0.
  \]
  \item [(h)] If $r$ is exit-not-entrance
  \[
  f(r-)=0.
  \]
  \item [(i)] If $m^h(\{\ell\})=\infty$ and/or $k^h(\{\ell\})=\infty$ then
  \[
  g(\ell)=-\gamma(l)f(l), ~\gamma(\ell)>0.
  \]
\end{itemize}
Define
\[
\cG^h f:= g
\]
for $f\in \cD(\cG^h)$. Note that (a) and (b) imply that when $f\in \cD(\cG^h)$
then $D_{s^h}^-f$ is left continuous and $D_{s^h}^+f$ right continuous.
By \cite{IM96} pages 100, 117 and 135 one has
\[
\cG^h=\cG^{h,\bullet},\quad \cD(\cG^h) = \cD(\cG^{h,\bullet}).
\]

Consider the special case when the diffusion $X$ has
\begin{itemize}
  \item Speed measure $m(dx)=m'(x)dx$.
  \item Scale function $s(x)=\int^x s'(y),dy$.
  \item No killing $k\equiv 0.$
\end{itemize}
Then $m^h, s^h, k^h$ are given by \eqref{e:h_speed}, \eqref{e:h_scale} and \eqref{e:h_killing}. Equations (a) and (b) above become
\be\label{e:integral_f^-}
\int_{[a,b)}g(x)h^2(x) m'(x)\,dx = \frac{h^2(b)}{s'(b)}f^-(b) - \frac{h^2(a)}{s'(a)} f^-(a) - \int_{[a,b)}f(x)k^h(dx).
\ee
\be\label{e:integral_f^+}
\int_{(a,b]}g(x)h^2(x)m'(x)\,dx =\frac{h^2(b)}{s'(b)}f^+(b) - \frac{h^2(a)}{s'(a)} f^+(a) - \int_{(a,b]}f(x)k^h(dx).
\ee
In order to find the representation of $h(x)$ from \eqref{e:h_representation} note that
\[
r_0(x,y) = \left\{
             \begin{array}{ll}
               c_0\psi_0(x)\phi_0(y), & \hbox{$x\leq y$;} \\
               c_0\psi_0(y)\phi_0(x), & \hbox{$x\geq y$.}
             \end{array}
           \right.
\]
where $\psi_0,\phi_0$ are the functions from \eqref{e:psi_0_+}, \eqref{e:psi_0_-} and
\[
c_0^{-1}
=
\phi_0(x) D_s^+ \psi_0(x) - \psi_0(x) D_s^+ \phi_0(x).
\]

Suppose that the original process wanders off to $\ell$.  Then
\[
\Pr^x\{T_a < T_b\} = (s(b) - s(x)) / (s(b) - s(a)).
\]
%We want to get the limit 1 here if we send $b$ to $r$ and we want to get something strictly positive if we send $a$ to $\ell$.  This means that any choice of $s$ is bounded below and goes to infinity as $s$ goes to $r$.

Note that
\[
\Pr^x\{T_z < \infty\}
= \left\{
    \begin{array}{ll}
      \psi_0(x) / \psi_0(z), & \hbox{$x \le z$,} \\
      \phi_0(x) / \phi_0(z), & \hbox{$x \ge z$.}
    \end{array}
  \right.
\]
Now for $x \le z$,
\[
\Pr^x\{T_z < \infty\}=\lim_{a \downarrow \ell} \Pr^x\{T_z < T_a\}=\lim_{a \downarrow \ell} (s(x) - s(a))/(s(z) - s(a)),
\]
while for $x \ge z$
\[
\Pr^x\{T_z < \infty\} = 1.
\]
This shows that we should take
\[
\psi_0(x) = \lim_{a \to \ell} (s(x) - s(a))
\]
and
\[
\phi_0(x) = 1.
\]
We can assume that
\[\lim_{a \to \ell} s(a) = 0.
\]
With that assumption,
\[
r_0(x,y)
= \left\{
    \begin{array}{ll}
      c_0 s(x), & \hbox{$x \le y$,} \\
      c_0 s(y), & \hbox{$x \ge y$.}
    \end{array}
  \right.
\]
We have
\[
c_0^{-1}
=
\phi_0(x) D_s^+ \psi_0(x) - \psi_0(x) D_s^+ \phi_0(x)
=
1 \times 1 - s(x) \times 0
\]
and so $c_0 = 1$.  Therefore,
\be\label{e:r_0_ell}
r_0(x,y)
= \left\{
    \begin{array}{ll}
      s(x), & \hbox{$x \le y$,} \\
       s(y), & \hbox{$x \ge y$.}
    \end{array}
  \right.
\ee

\section{Bang-bang process of a one-dimensional diffusion}\label{s:prop}

Assume that $X$ is a one-dimensional diffusion with state space $I$.
Using the formula for the resolvent of $X^b$, namely equation \eqref{e:resolve_second_construction2}, we get that with respect to the speed measure $m$ of $X$ the resolvent of $X^b$ has densities
\begin{equation}\label{e:r_lambda_1}
\begin{split}
r_\lambda^b(x,y)&= R_\lambda^b \delta_y(x)\\
&= {_a}R_\lambda^b \delta_y(x) + \psi_\lambda^b(x)R_\lambda^b\delta_y(a)\\
&= R_\lambda^h\delta_y(x) - \frac{R_\lambda^h\delta_a(x)}{R_\lambda^h\delta_a(a)}R_\lambda^h\delta_y(a) + h(x)^{-1}\frac{r_\lambda(x,a)}{r_\lambda(a,a)}\frac{R_\lambda^h\delta_y(a)}{\lambda R_\lambda^h(1)(a)} \\
&= r_\lambda^h(x,y) - \frac{r_\lambda^h(x,a)}{r_\lambda^h(a,a)}r_\lambda^h(a,y) + \frac{r_0(a,a)}{r_0(x,a)}\frac{r_\lambda(x,a)}{r_\lambda(a,a)} \frac{r_\lambda^h(a,y) r_0(a,a)}{r_0(a,a)-r_\lambda(a,a)}\\
\end{split}
\end{equation}
Note that with respect to the measure $m$ the $h$-transform looks like
\begin{equation}\label{e:r^h}
r_\lambda^h(x,y) = \frac{r_\lambda(x,y)}{h(x)h(y)} h^2(y) = r_\lambda(x,y) \frac{h(y)}{h(x)} = r_\lambda(x,y) \frac{r_0(y,a)}{r_0(x,a)}.
\end{equation}
As a result of \eqref{e:r_lambda_1} and \eqref{e:r^h}
\begin{eqnarray*}
r_\lambda^b(x,y) &=& r_\lambda(x,y)\frac{r_0(y,a)}{r_0(x,a)} - \frac{r_\lambda(x,a)\frac{r_0(a,a)}{r_0(x,a)}}{r_\lambda(a,a)}r_\lambda(a,y)\frac{r_0(y,a)}{r_0(a,a)} \\
&~&~+ ~\frac{r_0(a,a)}{r_0(x,a)}\frac{r_\lambda(x,a)}{r_\lambda(a,a)} \frac{r_\lambda(a,y)\frac{r_0(y,a)}{r_0(a,a)} r_0(a,a)}{r_0(a,a)-r_\lambda(a,a)}\nonumber\\
&=&r_\lambda(x,y)\frac{r_0(y,a)}{r_0(x,a)} - \frac{r_\lambda(x,a)}{r_\lambda(a,a)}r_\lambda(a,y)\frac{r_0(y,a)}{r_0(x,a)} \\
&~&\quad + \frac{r_0(a,a)}{r_0(x,a)}\frac{r_\lambda(x,a)}{r_\lambda(a,a)} \frac{r_\lambda(a,y) r_0(y,a) }{r_0(a,a)-r_\lambda(a,a)}\nonumber\\
&=& r_\lambda(x,y)\frac{r_0(y,a)}{r_0(x,a)} + \frac{r_\lambda(x,a)}{r_\lambda(a,a)}r_\lambda(a,y)\frac{r_0(y,a)}{r_0(x,a)}\left(\frac{r_\lambda(a,a)}{r_0(a,a)-r_\lambda(a,a)}\right)\nonumber
\end{eqnarray*}

and therefore
\begin{equation}\label{e:r^b}
r_\lambda^b(x,y)=\frac{r_0(y,a)}{r_0(x,a)} \left[r_\lambda(x,y)+ \frac{r_\lambda(x,a) r_\lambda(a,y)}{r_0(a,a)-r_\lambda(a,a)}\right]
\end{equation}

\begin{remark}
\label{r:bangbang_symmetric}
The resolvent of $X^b$ has symmetric densities
\[
\frac{r_0^2(a,a)}{r_0(x,a) r_0(y,a)} \left[r_\lambda(x,y)+ \frac{r_\lambda(x,a) r_\lambda(a,y)}{r_0(a,a)-r_\lambda(a,a)}\right]
\]
with respect to the measure 
$\left(\frac{r_0(y,a)}{r_0(a,a)}\right)^2 \, m(dy) = h^2(y) \, m(dy) = m^h(dy)$.
It follows that $m^h$ is a multiple of the speed measure of $X^b$.
\end{remark}

\begin{example}
Suppose that $X$ is Brownian motion with drift $-\mu$, $\mu > 0$, and $a=0$.  
For a suitable normalization of the scale measure, the speed measure of $X$ is 
$2 \exp(2 \mu x) \, dx$ and the corresponding resolvent densities are
$r_0(x,y) = 2 \mu \exp(-2 \mu(x \vee y))$ (see, for example, Appendix 1.14 in \cite{Bor02}).
We can use Remark~\ref{r:bangbang_symmetric} in a simple but somewhat tedious calculation
to compute the resolvent densities of $X^b$ against
the measure $m^h(dy) = 2 \exp(-2 \mu |y|)$ and see that they agree with the resolvent densities
of bang-bang Brownian motion given in Appendix 1.15 of \cite{Bor02}, so that
$X^b$ is indeed bang-bang Brownian motion.
\end{example}

%\subsection{Ornstein-Uhlenbeck Process }\label{s:OU}

\begin{example}
Let $X$ be the Ornstein-Uhlenbeck process
\begin{equation}\label{e:OU}
dX_t = -\gamma X_t \,dt + dW_t.
\end{equation}
The speed measure of this process is
\begin{equation}\label{e:speed}
m_\gamma(dx) = 2 \exp(-\gamma x^2)\,dx.
\end{equation}

When $\gamma>0$ the process is positive recurrent while when $\gamma<0$ the process is transient.
Suppose from now on that $\gamma<0$ so that we are in the transient case. We want to see what the process $X^b$ is in this setting. 

From  \cite{Bor02} Appendix 1.24 we have that the resolvent density of $X$ with respect to $m_\gamma$ is
\begin{equation}\label{e:r_lambda_OU}
\begin{split}
r_\lambda(x,y)\\
&:= \frac{\Gamma(\lambda/|\gamma|+1)}{2\sqrt{|\gamma|\pi}} \exp\left(-\frac{|\gamma|x^2}{2}\right) D_{-\lambda/|\gamma|-1}(-x\sqrt{2|\gamma|})\\
&\quad \times \exp\left(-\frac{|\gamma|y^2}{2}\right)D_{-\lambda/|\gamma|-1}(y\sqrt{2|\gamma|}),\quad x\geq y\\
\end{split}
\end{equation}
where $\Gamma(x)$ is the Gamma function and
\begin{eqnarray*}
D_{-\nu}(x) &:=&  e^{-x^2/4}2^{-\nu/2}\sqrt{\pi}\Bigg\{\frac{1}{\Gamma((\nu+1)/2)}\left(1+\sum_{k=1}^\infty\frac{\nu(\nu+2)\cdots(\nu+2k-2)}{(2k)!}x^{2k}\right)\\
&~&~-~\frac{x\sqrt{2}}{\Gamma(\nu/2)}\left(1+\sum_{k=1}^\infty\frac{(\nu+1)(\nu+3)\cdots(\nu+2k-1)}{(2k+1)!}x^{2k}\right)\Bigg\}
\end{eqnarray*}
is the parabolic cylinder function.

A natural conjecture would be that $X^b$ is a recurrent OU process. 
We show that this is not the case. Set $a=0$ and $y=0$. Then, for $x\geq 0$, equation \eqref{e:r_lambda_OU} becomes
\begin{equation}\label{e:r_0OU}
\begin{split}
r_0(x,0) &= \frac{\Gamma(1)}{2\sqrt{|\gamma|\pi}} \exp\left(-\frac{|\gamma|x^2}{2}\right) D_{-1}(|-x\sqrt{2|\gamma|}|)D_{-1}(0)\\
&= \frac{\Gamma(1)}{2\sqrt{|\gamma|\pi}} \exp\left(-\frac{|\gamma|x^2}{2}\right)  e^{(-x\sqrt{2|\gamma|})^2/4}\sqrt{\frac{\pi}{2}} \text{erfc}\left(\frac{|x|\sqrt{2|\gamma|}}{\sqrt 2}\right) \sqrt{\frac{\pi}{2}} \text{erfc}\left(0\right)\\
&= \frac{1}{4}\sqrt{\frac{\pi}{|\gamma|}} \text{erfc}(|x|\sqrt{|\gamma|}).\\
\end{split}
\end{equation}
where we used the identity
\begin{equation*}
D_{-1}(x) = e^{x^2/4}\sqrt{\frac{\pi}{2}} \text{erfc}\left(\frac{|x|}{\sqrt 2}\right).
\end{equation*}
and the \textit{error function} $\text{erf}$ and the \textit{complementary error function} $\text{erfc}$ are defined via
\begin{equation*}
\text{erf}(x) = \frac{2}{\sqrt \pi} \int_0^x e^{-t^2}\,dt
\end{equation*}
and
\begin{equation*}
\text{erfc}(x) = 1 - \text{erf}(x).
\end{equation*}
As a result of \eqref{e:r_0OU}
\begin{equation}\label{e:h_OU}
h(x) = \frac{r_0(x,0)}{r_0(0,0)} = \text{erfc}(|x|\sqrt{|\gamma|}).
\end{equation}

From Remark~\ref{r:bangbang_symmetric}, the speed measure of $X^b$ is a multiple of
\begin{eqnarray*}
m_\gamma^h (dx) &=& h^2(x) m_\gamma(dx)\\
&=& \left(\text{erfc}(|x|\sqrt{|\gamma|})\right)^2 2 \exp(-\gamma x^2)\,dx.
\end{eqnarray*}
Such a measure does not look like $m_{\gamma^*}$ from \eqref{e:speed} for any $\gamma^*$ and hence $X^b$ is not an OU process.
\end{example}

\section{Generator of the conditioned diffusion}\label{s:fixedpoint}

\begin{theorem}\label{t:cond_point}
Let $X$ be a one-dimensional transient diffusion on $I=(\ell,r)$ with $\ell,r$ inaccessible boundary points and such that
\[
\lim_{t\rightarrow \infty} X_t = \ell
\]
$\Pr^x$ almost surely for all $x\in (\ell,r)$. Assume that
\begin{itemize}
\item $X$ has an absolutely continuous speed measure $m(dx)=m'(x)\,dx$ and scale function $s(dx)=s'(x)\,dx$.
\item The densities $s'$ and $m'$ are strictly positive on $(\ell,r)$.
\item The densities are smooth enough, namely  $s'\in C^1((\ell,r))$ and $m'\in C((\ell,r))$.
\end{itemize}
Set 
\[
h(x)= \Pr^x\{T_a<\infty\}, \quad  x \in I.
\]

The generator $\cG^h$ of $X^h$ is given by
\[
\cG^h f(x) = \left\{
               \begin{array}{ll}
                 \frac{1}{h^2(y)m'(y)}\left(\frac{h^2(y)}{s'(y)}f'(y)\right)', & \hbox{$y\neq a$;} \\
                 \frac{-s''(a)}{m'(a)(s'(a))^2}f^+(a) + \frac{1}{m'(a)s'(a)}(f^+)^+(a), & \hbox{$y= a$;} \\
               \end{array}
             \right.
\]
and the domain of the generator is
\begin{equation*}
\begin{split}
&\cD(\cG^h)\\
&=\Bigg\{f\in C^2(\ell,a)\cap C^2(a,r): f^+(a)-f^+(a-)=f^-(a+)- f^-(a)=\frac{s'(a)}{s(a)}f(a),\\
& \frac{2s(a)(s'(a))^2-s^2(a)s''(a)}{(s'(a))^2}f^-(a) + \frac{s^2(a)}{s'(a)}(f^-)^-(a)=\frac{-s^2(a)s''(a)}{(s'(a))^2}f^+(a) \\
&\quad + \frac{s^2(a)}{s'(a)}(f^+)^+(a) \Bigg\}.
\end{split}
\end{equation*}
\end{theorem}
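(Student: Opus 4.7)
The plan is to reduce the theorem to a direct application of Theorem~\ref{t:h_diffusion_chars} followed by a careful unpacking of Definition~\ref{d:generator} for the specific characteristics that arise. First I would identify the characteristics of $X^h$ explicitly; second, derive the generator formula away from $a$ by the standard local computation; and third, read off the two domain conditions at $a$ as (i) a kink condition forced by the atom of the $h$-transformed killing measure, and (ii) a continuity condition on $\cG^h f$.

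\emph{Step 1 (characteristics of $X^h$).} Normalize the scale function so that $s(\ell+) = 0$. Since $X$ is transient with $X_t \to \ell$ almost surely and both boundaries are inaccessible, a standard scale-function computation yields $h(x) = s(x)/s(a)$ for $x \le a$ and $h(x) = 1$ for $x \ge a$; equivalently $\psi_0(x) = s(x)/s(a)$, $\phi_0 \equiv 1$, and $r_0(x,y) = s(x \wedge y)$. Taking $x_0 = a$ in \eqref{e_aexc}, the representing measure of $h$ is $\nu = \delta_a$, so Theorem~\ref{t:h_diffusion_chars} gives $s^h(dy) = h^{-2}(y) s'(y)\,dy$, $m^h(dy) = h^2(y) m'(y)\,dy$, and $k^h = (1/s(a))\,\delta_a$.

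\emph{Step 2 (formula away from $a$ and the kink at $a$).} On any interval disjoint from $a$, $k^h$ is null, so Definition~\ref{d:generator}(a)--(b) applied to $X^h$ reduces to the classical Sturm--Liouville identity, which produces the stated expression for $\cG^h f(y)$, $y \neq a$. For the first domain condition, I would apply condition (a) on $[a_1, b_1)$ with $a_1 < a < b_1$ and then let $a_1 \uparrow a$, $b_1 \downarrow a$: the left-hand side vanishes, the atom of $k^h$ contributes $-f(a)/s(a)$, while the $D^-_{s^h}$-terms combine (using $h(a\pm) = 1$ and $f \in C^2$ on each side of $a$) to $\bigl(f^+(a) - f^-(a)\bigr)/s'(a)$, yielding $f^+(a) - f^-(a) = \frac{s'(a)}{s(a)} f(a)$, which is the first stated boundary condition.

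\emph{Step 3 (continuity at $a$ and main obstacle).} Since $\cG^h f$ must lie in $\cC_b(I)$, its one-sided limits at $a$ must coincide. Computing them from the Step 2 formula (with $h \equiv 1$ on $(a,r)$ and $h(y) = s(y)/s(a)$ on $(\ell,a)$) and then multiplying through by $s^2(a) m'(a)$ produces the second domain condition term by term; the closed form $\cG^h f(a) = \frac{-s''(a)}{m'(a)(s'(a))^2} f^+(a) + \frac{1}{m'(a) s'(a)} (f^+)^+(a)$ is then just the right-hand limit $\cG^h f(a+)$, which by the continuity condition equals $\cG^h f(a-)$. The bookkeeping at $a$ is where errors would most easily creep in: the killing atom forces a kink in $f$ while the generator must still be continuous, so one must track $f^\pm(a)$, $(f^\pm)^\pm(a)$ and the two limits of $D_{s^h}^\pm f$ carefully, remembering that $h$ itself has a corner at $a$ even though $h(a) = 1$. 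Conditions (e)--(i) of Definition~\ref{d:generator} at $\ell$ and $r$ are vacuous, because both endpoints are inaccessible natural boundaries for $X$ and hence for $X^h$, so no restriction beyond $f \in \cC_b(I)$ is imposed there.
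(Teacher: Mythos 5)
Your proposal is correct and follows essentially the same route as the paper: identify $h$ explicitly via the scale function (so that $r_0(x,y)=s(x\wedge y)$ and $k^h$ is an atom at $a$), then unpack conditions (a)--(b) of the generator description on intervals away from $a$ and on shrinking intervals around $a$ to obtain the formula, the kink condition from the killing atom, and the matching condition from continuity of $\cG^h f$. The only differences are cosmetic --- you normalize $h(a)=1$ where the paper works with $h=r_0(\cdot,a)$, and you phrase the second domain condition as equality of the one-sided limits of $\cG^h f$ rather than equating the two one-sided difference quotients for $g(a)$ --- and these yield identical conditions.
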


\begin{proof}
Take
\[
h(x) = r_0(x,a) = \Pr^x\{T_a<\infty\} r_0(a,a).
\]
So, by \eqref{e:r_0_ell}
\be\label{e:h_point}
h(x)= \left\{
    \begin{array}{ll}
      s(x), & \hbox{$x \le a$,} \\
       s(a), & \hbox{$x \ge a$.}
    \end{array}
  \right.
\ee
Thus,
\be\label{e:h'_point}
h'(x)= \left\{
    \begin{array}{ll}
      s'(x), & \hbox{$x < a$,} \\
      0, & \hbox{$x > a$.}
    \end{array}
  \right.
\ee
At $x=a$ one has
\[
h^-(a)=s'(a)
\]
together with
\[
h^+(a)=0.
\]
It is clear from \eqref{e:h_representation} and the definition of $h$ that
\[
k^h(dx) = r_0(a,a) \delta_a(x) = s(a)\delta_a(dx)
\]
For $\ell<u<v<r$ and $a\notin [u,v]$ equations \eqref{e:integral_f^-} and  \eqref{e:integral_f^+} become
\[
\int_{[u,v)}g(x)h^2(x) m'(x)\,dx = \frac{h^2(u)}{s'(u)}f^-(u) - \frac{h^2(v)}{s'(v)} f^-(v)
\]
and
\[
\int_{(u,v]}g(x)h^2(x)m'(x)\,dx =\frac{h^2(u)}{s'(u)}f^+(u) - \frac{h^2(v)}{s'(v)} f^+(v)
\]
which imply by arguments similar to the above that $f\in C^2(u,v)$ and
\[
g(x)h^2(x)m'(x) = \left(\frac{h^2(x)}{s'(x)}f'(x)\right)'
\]
for all $x\in(u,v)$.

Now use \eqref{e:integral_f^-} for the interval $[a,a+\epsilon)$ to get
\be
\begin{split}
\int_{[a,a+\epsilon)}g(x)h^2(x) m'(x)\,dx &= \frac{h^2(a+\epsilon)}{s'(a+\epsilon)}f^-(a+\epsilon) - \frac{h^2(a)}{s'(a)} f^-(a) \\
&\quad - \int_{[a,a+\epsilon)}f(x)s(a)\delta_a(dx)\\
&= \frac{h^2(a+\epsilon)}{s'(a+\epsilon)}f^-(a+\epsilon) - \frac{h^2(a)}{s'(a)} f^-(a) - f(a)s(a)
\end{split}
\ee
which implies
\be\label{e:f^-(a+)}
\left(\lim_{\epsilon\downarrow 0}f^-(a+\epsilon)-f^-(a)\right) = \frac{s'(a)}{h^2(a)}f(a)s(a) = \frac{s'(a)}{s(a)} f(a)
\ee
Similarly if we use \eqref{e:integral_f^-} for the interval $[a-\epsilon,a)$
\be
\begin{split}
\int_{[a-\epsilon,a)}g(x)h^2(x) m'(x)\,dx &= \frac{h^2(a)}{s'(a)}f^-(a) - \frac{h^2(a-\epsilon)}{s'(a-\epsilon)} f^-(a-\epsilon) \\
&\quad - \int_{[a-\epsilon,a)}f(x)s(a)\delta_a(dx)\\
&= \frac{h^2(a)}{s'(a)}f^-(a) - \frac{h^2(a-\epsilon)}{s'(a-\epsilon)} f^-(a-\epsilon)
\end{split}
\ee
which forces
\be
\begin{split}
g(a)h^2(a) m'(a)&= \lim_{\epsilon\downarrow 0}\frac{\frac{h^2(a)}{s'(a)}f^-(a) - \frac{h^2(a-\epsilon)}{s'(a-\epsilon)} f^-(a-\epsilon) }{\epsilon}\\
&= \frac{2h(a)h^-(a)s'(a)-h^2(a)s''(a)}{(s'(a))^2}f^-(a) + \frac{h^2(a)}{s'(a)}(f^-)^-(a)\\
&=\frac{2s(a)(s'(a))^2-s^2(a)s''(a)}{(s'(a))^2}f^-(a) + \frac{s^2(a)}{s'(a)}(f^-)^-(a)
\end{split}
\ee
Next use \eqref{e:integral_f^+} for the interval $(a-\epsilon,a]$ to get
\be
\begin{split}
\int_{(a-\epsilon,a]}g(x)h^2(x)m'(x)\,dx &=\frac{h^2(a)}{s'(a)}f^+(a) - \frac{h^2(a-\epsilon)}{s'(a-\epsilon)} f^+(a-\epsilon) - \int_{(a-\epsilon,a]}s(a)\delta_a(dx)\\
&= \frac{h^2(a)}{s'(a)}f^+(a) - \frac{h^2(a-\epsilon)}{s'(a-\epsilon)} f^+(a-\epsilon) - s(a)f(a)
\end{split}
\ee
which implies
\be\label{e:f^+(a-)}
\left(f^+(a)-\lim_{\epsilon\downarrow 0}f^+(a-\epsilon)\right) = \frac{s'(a)}{h^2(a)}f(a)s(a) = \frac{s'(a)}{s(a)} f(a).
\ee
Next use \eqref{e:integral_f^+} for the interval $(a,a+\epsilon]$ to get
\be
\begin{split}
\int_{(a,a+\epsilon]}g(x)h^2(x)m'(x)\,dx &=\frac{h^2(a+\epsilon)}{s'(a+\epsilon)}f^+(a+\epsilon) - \frac{h^2(a)}{s'(a)} f^+(a) - \int_{(a,a+\epsilon]}s(a)\delta_a(dx)\\
&= \frac{h^2(a+\epsilon)}{s'(a+\epsilon)}f^+(a+\epsilon) - \frac{h^2(a)}{s'(a)} f^+(a)
\end{split}
\ee
which forces
\be
\begin{split}
g(a)h^2(a) m'(a)&= \lim_{\epsilon\downarrow 0} \frac{\frac{h^2(a+\epsilon)}{s'(a+\epsilon)}f^+(a+\epsilon) - \frac{h^2(a)}{s'(a)} f^+(a)}{\epsilon}\\
&=\frac{2h(a)h^+(a)s'(a)-h^2(a)s''(a)}{(s'(a))^2}f^+(a) + \frac{h^2(a)}{s'(a)}(f^+)^+(a)\\
&= \frac{-s^2(a)s''(a)}{(s'(a))^2}f^+(a) + \frac{s^2(a)}{s'(a)}(f^+)^+(a).\\
\end{split}
\ee
Suppose next that we have $g=\cG^h f$ for $f\in \cD(\cG^h)$. If $a\notin(u,v)$ we have
\begin{equation*}
\begin{split}
\int_{[u,v)}g(x)h^2(x)m'(x)dx &= \int_{(u,v]}g(x)h^2(x)m'(x)dx\\
&=  \int_{[u,v)}\left(\frac{h^2(x)}{s'(x)}f'(x)\right)'dx = \frac{h^2(v)}{s'(v)}f'(v)-\frac{h^2(u)}{s'(u)}f'(u).
\end{split}
\end{equation*}
Apply this, the fact that $h$, $g$, $s'$ are continuous, and \eqref{e:f^-(a+)} to get
\be
\begin{split}
\int_{[a,b)}g(x)h^2(x)m'(x)dx &= \lim_{\epsilon\downarrow 0}\int_{[a,a+\epsilon)}g(x)h^2(x)m'(x)dx+  \lim_{\epsilon\downarrow 0}\int_{[a+\epsilon,b)}g(x)h^2(x)m'(x)dx\\
&=  \lim_{\epsilon\downarrow 0}\int_{[a,a+\epsilon)}g(x)h^2(x)m'(x)dx +  \lim_{\epsilon\downarrow 0}\frac{h^2(b)}{s'(b)}f'(b)\\
&\quad -\lim_{\epsilon\downarrow 0}\frac{h^2(a+\epsilon)}{s'(a+\epsilon)}f'(a+\epsilon)\\
&=  \lim_{\epsilon\downarrow 0}\int_{[a,a+\epsilon)}g(x)h^2(x)m'(x)dx +  \frac{h^2(b)}{s'(b)}f^-(b)\\
&\quad -\lim_{\epsilon\downarrow 0}\frac{h^2(a+\epsilon)}{s'(a+\epsilon)}f^-(a+\epsilon)\\
&= 0 + \frac{h^2(b)}{s'(b)}f^-(b) - \frac{h^2(a)}{s'(a)}\left(f^-(a)+\frac{s'(a)}{s(a)}f(a)\right)\\
&= \frac{h^2(b)}{s'(b)}f^-(b) - \frac{h^2(a)}{s'(a)}f^-(a) - s(a)f(a)\\
&= \frac{h^2(b)}{s'(b)}f^-(b) - \frac{h^2(a)}{s'(a)}f^-(a) - \int_{[a,b)}f(x)s(a)\delta_a(dx).
\end{split}
\ee
Using the left continuity of $f^-$ one can also see that
\be
\begin{split}
\int_{[c,a)}g(x)h^2(x)m'(x)dx &=\lim_{\epsilon\downarrow 0}\int_{[c,a-\epsilon)}g(x)h^2(x)m'(x)dx \\
&\quad +\lim_{\epsilon\downarrow 0}\int_{[a-\epsilon,a)}g(x)h^2(x)m'(x)dx \\
&= \lim_{\epsilon\downarrow 0}\frac{h^2(a-\epsilon)}{s'(a-\epsilon)}f^-(a-\epsilon) - \frac{h^2(c)}{s'(c)}f'(c)\\
&= \frac{h^2(a)}{s'(a)}f^-(a) - \frac{h^2(c)}{s'(c)}f'(c).
\end{split}
\ee
Analogous arguments using \eqref{e:f^+(a-)} show that for $c<a$
\be
\begin{split}
\int_{(c,a]}g(x)h^2(x)m'(x)dx &=\lim_{\epsilon\downarrow 0}\int_{(c,a-\epsilon]}g(x)h^2(x)m'(x)dx \\
 &\quad +  \lim_{\epsilon\downarrow 0}\int_{(a-\epsilon,a]}g(x)h^2(x)m'(x)dx\\
&=  \lim_{\epsilon\downarrow 0}\frac{h^2(a-\epsilon)}{s'(a-\epsilon)}f^+(a-\epsilon)-\frac{h^2(c)}{s'(c)}f^+(c)\\
&= \frac{h^2(a)}{s'(a)}f^+(a) - \frac{h^2(c)}{s'(c)}f^+(c) - s(a)f(a)\\
&= \frac{h^2(a)}{s'(a)}f^+(a) - \frac{h^2(c)}{s'(c)}f^+(c) - \int_{(c,a]}f(x)s(a)\delta_a(dx)
\end{split}
\ee
and using the right continuity of $f^+$
\be
\begin{split}
\int_{(a,b]}g(x)h^2(x)m'(x)dx &= \lim_{\epsilon\downarrow 0}\int_{(a,a+\epsilon]}g(x)h^2(x)m'(x)dx\\
&\quad +  \lim_{\epsilon\downarrow 0}\int_{(a+\epsilon,b]}g(x)h^2(x)m'(x)dx\\
&= \frac{h^2(b)}{s'(b)}f^+(b)- \lim_{\epsilon\downarrow 0}\frac{h^2(a+\epsilon)}{s'(a+\epsilon)}f^+(a+\epsilon)\\
&= \frac{h^2(b)}{s'(b)}f^+(b)- \frac{h^2(a)}{s'(a)}f^+(a).
\end{split}
\ee
\end{proof}

\begin{example}
Consider conditioning Brownian motion with drift $-\mu$, $\mu > 0$, to be at $0$ at a large exponential time.
%
%Note that, using formula 1.0.5 from \cite{Bor02}, if $T$ is an exponential random variable with rate $\lambda$ which is independent from $X$ we have
%\begin{equation}\label{e:density}
%\Pr^x(W_T-\mu T\in dz) = \frac{\lambda}{\sqrt{2\lambda+\mu^2}} e^{-\mu(z-x)-|z-x|\sqrt{2\lambda+\mu^2}}dz.
%\end{equation}
%From \eqref{e:density} we get that
%\begin{eqnarray*}
%\frac{h(x)}{h(y)} =\frac{e^{\mu(x-|x|)}}{e^{\mu(y-|y|)}}.
%\end{eqnarray*}
%Therefore, the limit of our conditioned diffusion is given by an $h$-transform of $X_t$ with
%\begin{equation}\label{e:h_brownian}
%h(z) = e^{\mu(z - |z|)},
%\end{equation}
%that is $h(z) = 1$ when $z> 0$ and $h(z)=\exp(2 \mu z)$ when $z < 0$. Now we can make use of the results 
From Theorem \ref{t:cond_point} we get that $(X_t^h)_{t\geq 0}$ has generator
\[
\cG^hf(y) = \left\{
               \begin{array}{ll}
                 \frac{1}{2}f''(y)-\mu\sgn{(y)}f'(y), & \hbox{$y\neq 0$,} \\
		\frac{-s''(0)}{m'(0)(s'(0))^2}f^+(0) + \frac{1}{m'(0)s'(0)}(f^+)^+(0), & \hbox{$y= 0$,} \\         \end{array}
             \right.
\]
with domain 
\begin{equation*}
\begin{split}
&\cD(\cG^h)\\
&=\Bigg\{f\in C^2(-\infty,0)\cap C^2(0,\infty): f^+(0)-f^+(0-)=f^-(0+)- f^-(0)=\frac{s'(0)}{s(0)}f(0),\\
& \frac{2s(0)(s'(0))^2-s^2(0)s''(0)}{(s'(0))^2}f^-(0) + \frac{s^2(0)}{s'(0)}(f^-)^-(0)=\frac{-s^2(0)s''(0)}{(s'(0))^2}f^+(0) \\
&\quad + \frac{s^2(0)}{s'(0)}(f^+)^+(0) \Bigg\}.
\end{split}
\end{equation*}
Noting that $m'(x) = 2e^{-2\mu x}$ and $s(x)=e^{2\mu x}$ and $s(0)=\frac{1}{2\mu}$ straightforward computations yield 
\[
\cG_Z f(y) = \left\{
               \begin{array}{ll}
                 \frac{1}{2}f''(y)-\mu\sgn{(y)}f'(y), & \hbox{$y\neq 0$,} \\
		-\mu f^+(0) + \frac{1}{2}(f^+)^+(0), & \hbox{$y= 0$,} \\               \end{array}
             \right.
\]
with
\begin{equation*}
\begin{split}
\cD(\cG_Z)&=\Bigg\{f\in C^2(-\infty,0)\cap C^2(0,\infty): f^+(0)-f^+(0-)=f^-(0+)- f^-(0)=2\mu f(0),\\
& \mu f^-(0) + \frac{1}{2}(f^-)^-(0)=-\mu f^+(0) + \frac{1}{2}(f^+)^+(0) \Bigg\}.
\end{split}
\end{equation*}
\end{example}

\begin{example}
The solution to the SDE
\begin{equation}\label{e:pop2_}
dX_t = X_t(\mu-\kappa X_t)\,dt + \sigma X_t \,dW_t, t\geq 0.
\end{equation}
models a population living in one patch in which the individuals compete for resources. 
 assume that $\mu - \frac{\sigma^2}{2}<0$ so that $(X_t)_{t\geq0}$ is transient and $X_t\downarrow 0$ as $t\to \infty$ 
$\Pr^x$-almost surely for all $x\in (0,\infty)$. Note that if we start $(X_t)_{t\geq 0}$ at $x\in (0,\infty)$, 
 the process is almost surely positive for all $t\geq 0$.  See \cite{EHS14} for more details.
We study what we happens when we condition this diffusion for a point $a\in (0,\infty)$. Let $\mathcal {L}$ be the generator of $X$
\[
\mathcal{L} = (\mu x-\kappa x^2)\frac{d}{dx} + \frac{1}{2}\sigma^2 x^2 \frac{d^2}{dx^2}.
\]
The generator of $X^h$ is
\[
\mathcal{L}^h = \left(\mu x-\kappa x^2+\frac{\partial_x h(x)}{h(x)}\right)\frac{d}{dx} + \frac{1}{2}\sigma^2 x^2 \frac{d^2}{dx^2}
\]
with a suitable domain.

Making using of \eqref{e:r_0_ell} and \eqref{e:scale_speed_C^1} we get
%\begin{equation}\label{e:h_pop2}
\[
h(x) =
\begin{cases}  \int_0^x z^{-\frac{2\mu}{\sigma^2}} e^{\frac{2\kappa z}{\sigma^2}}\,dz, &\mbox{if } x \le a, \\
  1, & \mbox{if }x\geq a,
\end{cases}
\]
%\end{equation}
%Equation \eqref{e:h_pop2} yields
%\[
%\frac{\partial_x h(x)}{h(x)}=
%\begin{cases}  \frac{x^{-\frac{2\mu}{\sigma^2}} e^{\frac{2\kappa x}{\sigma^2}}}{\int_0^x z^{-\frac{2\mu}{\sigma^2}} e^{\frac{2\kappa z}{\sigma^2}}\,dz}, &\mbox{if } x \le a,\\
   %0, & \mbox{if } x\geq a,
%\end{cases}
%\]
so the new drift is given by
\[
x \mapsto
\begin{cases} \mu x -\kappa x^2+ \frac{x^{-\frac{2\mu}{\sigma^2}} e^{\frac{2\kappa x}{\sigma^2}}}{\int_0^x z^{-\frac{2\mu}{\sigma^2}} e^{\frac{2\kappa z}{\sigma^2}}\,dz}, &\mbox{if } x \le a\\
  \mu x -\kappa x^2, & \mbox{if } x\geq a.
\end{cases}
\]
For $x$ small the new drift looks like
\[
x \mapsto \mu x -\kappa x^2 +\left(1 - \frac{2 \mu}{ \sigma^2}\right) \frac{1}{x}.
\]
\end{example}

\section{Another mode of conditioning}\label{s:another_conditioning}

There is another way to condition a Markov process to be in a fixed state at a large random time.
It is described in the following theorem.

\begin{theorem}\label{t:main_another_conditioning}
Let $X$ be a transient Borel right process on a Lusin space $E$ and suppose $a$ is a regular point. Assume furthermore that the resolvent $(R_\lambda)_{\lambda> 0}$ of $X$ has a density with respect to a measure $m$.
From excursion theory the amount of local time $X$ started in $a$ spends in $a$ is exponential with some rate $\nu$.
Take an independent exponential with rate $\lambda$ time $\zeta$ and
condition the process  $X$ to spend local time in $a$ that is at least $\zeta$ and kill the conditioned process when amount $\zeta$
of local time has been spent at $a$. Then, as we let $\lambda\downarrow 0$, we get the bang-bang process $X^b$ killed when the local time at $a$ exceeds an independent exponential with rate $\nu$.
\end{theorem}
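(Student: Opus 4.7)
The plan is to derive an explicit formula for the finite-dimensional laws of the conditioned-and-killed process at parameter $\lambda>0$, identify the $\lambda\downarrow 0$ limit as the law of the Doob $h$-transform $X^h$ of $X$ run to its lifetime, and then invoke the construction of $X^b$ in Section~\ref{s:bangbang_2}. Write $h(x):=\Pr^x\{T_a<\infty\}$ and let $L$ denote the local time of $X$ at $a$. Since $a$ is regular and (by hypothesis) $L_\infty$ has law $\mathrm{Exp}(\nu)$ under $\Pr^a$, the strong Markov property at $T_a$ gives $\bP^x[e^{-\lambda L_\infty}] = 1 - h(x)\lambda/(\lambda+\nu)$, so the conditioning event has positive mass $\Pr^x\{L_\infty\ge\zeta\}=h(x)\lambda/(\lambda+\nu)$. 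I will write $\sigma_\zeta:=\inf\{s:L_s>\zeta\}$ for the killing time, so that $\{\sigma_\zeta>t\}=\{L_t<\zeta\}$.

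Next, for a bounded nonnegative $\cF_t$-measurable $Z$, I would condition on $\cF_\infty$ and integrate out the independent $\zeta\sim\mathrm{Exp}(\lambda)$, then use the additive-functional identity $L_\infty=L_t+L_\infty\circ\theta_t$ together with the Markov property at $t$ to compute
\[
\bP^x\bigl[Z\,\ind\{L_t<\zeta\le L_\infty\}\bigr]
=\bP^x\bigl[Z(e^{-\lambda L_t}-e^{-\lambda L_\infty})\bigr]
=\frac{\lambda}{\lambda+\nu}\,\bP^x\bigl[Z\,e^{-\lambda L_t}h(X_t)\bigr].
\]
Dividing by $\Pr^x\{L_\infty\ge\zeta\}$ and then letting $\lambda\downarrow 0$ via dominated convergence yields
\[
\lim_{\lambda\downarrow 0}\widetilde{\bP}^x_\lambda\bigl[Z\,\ind\{\sigma_\zeta>t\}\bigr]
=\frac{1}{h(x)}\bP^x[Z\,h(X_t)],
\]
which by Remark~\ref{r:doob} is precisely the law of $X^h$ evaluated on $\{\zeta^h>t\}$, where $\zeta^h$ is its lifetime. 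Since this holds for every $t\ge 0$ and every bounded nonnegative $\cF_t$-measurable $Z$, the full law of the conditioned-killed process converges to that of $X^h$ run to $\zeta^h$.

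The final step is to identify $X^h$ run to its lifetime with $X^b$ killed when the local time at $a$ first exceeds an independent $\mathrm{Exp}(\nu)$ random variable. This is essentially a restatement of the construction of $X^b$ in Section~\ref{s:bangbang_2}: by definition, $X^b$ is the concatenation of i.i.d.\ copies of $X^h$, each run to its lifetime, so the initial $X^h$-piece of $X^b$ under $\bP^x$ coincides in law with $X^h$ under $\bP^x$ run to $\zeta^h$, and by the argument closing the proof of Theorem~\ref{t:main_2} the local time of $X^b$ at $a$ accrued over this first piece has distribution $\mathrm{Exp}(\nu)$. The main obstacle is this last identification for general starting point $x$: one must check that the local time of $X^h$ at $a$ up to its lifetime is $\mathrm{Exp}(\nu)$ under $\bP^x$ rather than just under $\bP^a$. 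This follows because $L$ does not accumulate before $T_a$, and by the strong Markov property of $X^h$ at $T_a$ the excursion-theoretic identification at $x=a$ supplied by Theorem~\ref{t:main_2} carries over verbatim.
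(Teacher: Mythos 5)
Your argument is correct, and it takes a genuinely different route from the paper's. The paper's proof works at the level of the pre-limit object: it invokes competing exponentials and excursion theory to assert that, for each fixed $\lambda>0$, the conditioned-and-killed process is already exactly $X^b$ killed when its local time at $a$ exceeds an independent exponential of rate $\lambda+\nu$, after which the limit $\lambda\downarrow 0$ is immediate. You instead compute the finite-dimensional functionals directly: the additive-functional identity $L_\infty=L_t+L_\infty\circ\theta_t$ and the Laplace transform $\bP^y[e^{-\lambda L_\infty}]=1-h(y)\lambda/(\lambda+\nu)$ give the exact pre-limit formula $\frac{1}{h(x)}\bP^x[Z\,e^{-\lambda L_t}h(X_t)]$, whose $\lambda\downarrow 0$ limit is the $h$-transform semigroup; the identification of $X^h$ run to its lifetime with $X^b$ killed at an independent $\mathrm{Exp}(\nu)$ amount of local time is then delegated to Theorem~\ref{t:main_2} (together with Theorem~\ref{t:main_1} and Remark~\ref{r:doob}). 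What your route buys is rigor and explicitness: the paper's proof is frankly a sketch (``it seems reasonable and can be shown by excursion theory''), whereas your computation is a complete verification of the convergence, and your intermediate formula even recovers the paper's claim as a by-product, since $e^{-\lambda L_t}$ exhibits the pre-limit process as $X^h$ additionally killed at rate $\lambda$ per unit local time at $a$, i.e.\ $X^b$ killed at an $\mathrm{Exp}(\lambda+\nu)$ local-time level. What the paper's route buys is a sharper structural statement about the conditioned process for each fixed $\lambda$, not merely in the limit. Your closing worry about general starting points is handled correctly: no local time accrues before $T_a$, the $h$-transform hits $a$ almost surely before its lifetime, and the strong Markov property at $T_a$ reduces the claim to the case $x=a$ already treated in the proof of Theorem~\ref{t:main_2}.
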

\begin{proof}
By the competing exponentials result, it seems reasonable and can be shown by
excursion theory that if we condition on the amount of local time in $a$
being bigger than $\zeta$ and look at the conditioned process killed at $\zeta$
we get the bang-bang process $X^b$ killed when the local time at $a$ exceeds an
independent exponential with rate $\lambda + \nu$ and so letting $\lambda $ go
to zero we just get the bang-bang process $X^b$ killed when the local time
exceeds an exponential with rate $\nu$.  
\end{proof}

{\bf Acknowledgments.}  The authors thank Patrick Fitzsimmons and Paavo Salminen for helpful discussions.

\bibliographystyle{amsalpha}
\bibliography{asymptotic}
\end{document}